\newtheorem{theorem}{Theorem}
\theoremstyle{plain}
\newtheorem{corollary}{Corollary}
\newtheorem{definition}{Definition}
\newtheorem{example}{Example}
\numberwithin{equation}{section}
\begin{document}
\title[On Invariant Submanifolds of a generalized Kenmotsu manifold]{On
Invariant Submanifolds of a generalized Kenmotsu manifold}
\author{Aysel TURGUT\ VANLI}
\address{Departmans Matematic, Universty of Gazi,\\
06500, Ankara TURKEY}
\email{avanli@gazi.edu.tr}
\author{Ramazan SARI}
\email{ramazansr@gmail.com}
\subjclass[2010]{53C17, 53C25, 53C40.}
\keywords{Generalized Kenmotsu manifold, invariant submanifolds,
semi-parallel and 2-semi-parallel submanifolds}

\begin{abstract}
In this paper, invariant submanifolds of a generalized Kenmotsu manifold are
studied. Necessary and sufficient conditions are given on a submanifold of a
generalized Kenmotsu manifold to be an invariant submanifold.In this case, we
investigate further properties of invariant submanifolds of \ a generalized
Kenmotsu manifold. In addition, some theorems are given related to an
invariant submanifold of a generalized Kenmotsu manifold.We consider
semiparallel and 2-semiparallel invariant submanifolds of a generalized
Kenmotsu manifold.
\end{abstract}

\maketitle

\section{ Introduction}

In 1963, Yano \cite{Y} introduced an $f$-structure on a C$^{\infty }$
m-dimensional manifold $M$, defined by a non-vanishing tensor field $\varphi 
$ of type $(1,1)$ which satisfies $\varphi ^{3}+\varphi =0$ and has constant
rank $r$.It is know that in this case $r$ is even, $r=2n.$ Moreover, $TM$ splits into two complementary subbundles $Im\varphi $ and $\ker\varphi $ and the restriction of $\varphi $ to $ Im\varphi $
determines a complex structure on such subbundle. It is known that the
existence of an $f$-structure on $M$ is equivalent to a reduction of the
structure group to $U(n)\times O(s)$ \cite{B}, where $s=m-2n$.

In \cite{K} , K. Kenmotsu has introduced a Kenmotsu manifold. In \cite{TVS},
present autors have introduced generalized Kenmotsu manifolds.

In \cite{D} , J. Deprez defined a semi-parallel immersion. Let $%
i:M\rightarrow \bar{M}$ be an isometrik immersion of a Riemannian manifold, $%
\bar{R}$ curvature tensor of the Van der Waerden-Bortolotti connection $\bar{%
\nabla}$ of $i$ and $h$ the second fundemental from of $i,$ then $i$ is said
to be semi-parallel if $\bar{R}.h=0.$In addition, J. Deprez studied
semi-parallel hypersurfaces in \cite{De}.

In \cite{ALMO} , K. Arslan and colleagues defined, a submanifold to be
2-semi-parallel if $R.\nabla h=0$ where R curvature tensor of the Van der
Waerden-Bortolotti connection $\nabla $ of $i$ and h the second fundemental
from of $i.$

In this paper, we give necessary and sufficient conditions for a submanifold
of a generalized Kenmotsu manifold to be an invariant submanifold and we
consider the invariant case. In addition, we study semi-parallel and
2-semi-parallel invariant submanifolds of a generalized Kenmotsu manifold.

\section{Generalized Kenmotsu Manifolds}

\bigskip In \textit{\cite{GY},} a $(2n+s)-$dimensional differentiable
manifold $M$ is called metric $f-$manifold if there exist an $(1,1)$ type
tensor field $\varphi $, $s$ vector fields $\xi _{1},\dots ,\xi _{s}$, $s$ $%
1 $-forms $\eta ^{1},\dots ,\eta ^{s}$ and a Riemannian metric $g$ on $M$
such that%
\begin{equation}
\varphi ^{2}=-I+\overset{s}{\underset{i=1}{\sum }}\eta ^{i}\otimes \xi _{i},%
\begin{array}{cc}
\begin{array}{c}
\end{array}
& 
\end{array}%
\eta ^{i}(\xi _{j})=\delta _{ij}
\end{equation}%
\begin{equation}
g(\varphi X,\varphi Y)=g(X,Y)-\overset{s}{\underset{i=1}{\sum }}\eta
^{i}(X)\eta ^{i}(Y),
\end{equation}%
for any $X,Y\in \Gamma (TM),$ $i,j\in \{1,\dots ,s\}$. In addition, we have%
\begin{equation}
\eta ^{i}(X)=g(X,\xi _{i}),%
\begin{array}{cc}
\begin{array}{c}
\end{array}
& 
\end{array}%
g(X,\varphi Y)=-g(\varphi X,Y),%
\begin{array}{cc}
& 
\end{array}%
\varphi \xi _{i}=0,%
\begin{array}{cc}
& 
\end{array}%
\eta ^{i}\circ \varphi =0.
\end{equation}

Then, a $2$-form $\Phi $ is defined by $\Phi (X,Y)=g(X,\varphi Y)$, for any $%
X,Y\in \Gamma (TM)$, called the \textit{fundamental }$\mathit{2}$\textit{%
-form}. Moreover, a framed metric manifold is \textit{normal} if 
\begin{equation*}
\lbrack \varphi ,\varphi ]+2\underset{i=1}{\overset{s}{\sum }}d\eta
^{i}\otimes \xi _{i}=0,
\end{equation*}%
where $[\varphi ,\varphi ]$ is denoting the Nijenhuis tensor field
associated to $\varphi $ .

In \cite{van}, let M be a (2n+s)-dimensional metric f-manifold. If there exists
2-form $\Phi $ such that $\eta ^{1}\wedge ...\wedge \eta ^{s}\wedge \Phi
^{n}\neq 0$ on M then M is called an almost s-contact metric structure.

\begin{definition}
Let $M$ be an almost $s-$contact metric manifold of dimension $(2n+s)$, $%
s\geq 1$, with $\left( \varphi ,\xi _{i},\eta ^{i},g\right) $ . $M$ is said
to be a generalized almost Kenmotsu manifold if for all $1\leq i\leq s,$ $1-$%
forms $\eta ^{i}$ are closed and $d\Phi =2\underset{i=1}{\overset{s}{\sum }}%
\eta ^{i}\wedge \Phi .$ A normal generalized almost Kenmotsu manifold $M$ is
called a generalized Kenmotsu manifold \cite{TVS}.
\end{definition}

\begin{example}
Let $n=2$ and $s=3$. The vector fields%
\begin{equation*}
e_{1}=f_{1}(z_{1},z_{2},z_{3})\frac{\partial }{\partial x_{1}}%
+f_{2}(z_{1},z_{2},z_{3})\frac{\partial }{\partial y_{1}},
\end{equation*}%
\begin{equation*}
e_{2}=-f_{2}(z_{1},z_{2},z_{3})\frac{\partial }{\partial x_{1}}%
+f_{1}(z_{1},z_{2},z_{3})\frac{\partial }{\partial y_{1}},
\end{equation*}%
\begin{equation*}
e_{3}=f_{1}(z_{1},z_{2},z_{3})\frac{\partial }{\partial x_{2}}%
+f_{2}(z_{1},z_{2},z_{3})\frac{\partial }{\partial y_{2}},
\end{equation*}%
\begin{equation*}
e_{4}=-f_{2}(z_{1},z_{2},z_{3})\frac{\partial }{\partial x_{2}}%
+f_{1}(z_{1},z_{2},z_{3})\frac{\partial }{\partial y_{2}},
\end{equation*}%
\begin{equation*}
e_{5}=\frac{\partial }{\partial z_{1}},e_{6}=\frac{\partial }{\partial z_{2}}%
,e_{7}=\frac{\partial }{\partial z_{3}}
\end{equation*}%
where $f_{1}$ and $f_{2}$ are given by 
\begin{eqnarray*}
f_{1}(z_{1},z_{2},z_{3}) &=&c_{2}e^{-(z_{1}+z_{2}+z_{3})}\cos
(z_{1}+z_{2}+z_{3})-c_{1}e^{-(z_{1}+z_{2}+z_{3})}\sin (z_{1}+z_{2}+z_{3}), \\
f_{2}(z_{1},z_{2},z_{3}) &=&c_{1}e^{-(z_{1}+z_{2}+z_{3})}\cos
(z_{1}+z_{2}+z_{3})+c_{2}e^{-(z_{1}+z_{2}+z_{3})}\sin (z_{1}+z_{2}+z_{3})
\end{eqnarray*}%
for nonzero constant $c_{1},c_{2}.$ It is obvious that $\left\{
e_{1},e_{2},e_{3},e_{4},e_{5},e_{6},e_{7}\right\} $ are linearly independent
at each point of $M$. Let $g$ be the Riemannian metric defined by%
\begin{equation*}
g(e_{i},e_{j})=\left\{ 
\begin{array}{lll}
1, &  & \text{for }i=j \\ 
0, &  & \text{for }i\neq j%
\end{array}%
\right.
\end{equation*}%
for all $i,j\in \left\{ 1,2,3,4,5,6,7\right\} $ and given by the tensor
product%
\begin{equation*}
g=\frac{1}{f_{1}^{2}+f_{2}^{2}}\sum_{i=1}^{2}(dx_{i}\otimes
dx_{i}+dy_{i}\otimes dy_{i})+dz_{1}\otimes dz_{1}+dz_{2}\otimes
dz_{2}+dz_{3}\otimes dz_{3},
\end{equation*}%
where $\left\{ x_{1},y_{1},x_{2},y_{2},z_{1},z_{2},z_{3}\right\} $ are
standard coordinates in $%
\mathbb{R}
^{7}$. Let $\eta ^{1}$, $\eta ^{2}$ and $\eta ^{3}$ be the $1$-form defined
by $\eta ^{1}(X)=g(X,e_{5})$, $\eta ^{2}(X)=g(X,e_{6})$ and $\eta
^{3}(X)=g(X,e_{7})$, respectively, for any vector field $X$ on $M$ and $\varphi 
$ be the $(1,1)$ tensor field defined by 
\begin{eqnarray*}
\varphi (e_{1}) &=&e_{2},%
\begin{array}{cc}
\begin{array}{c}
\end{array}
& 
\end{array}%
\varphi (e_{2})=-e_{1},%
\begin{array}{cc}
& 
\end{array}%
\varphi (e_{3})=e_{4},%
\begin{array}{cc}
\begin{array}{c}
\end{array}
& 
\end{array}%
\varphi (e_{4})=-e_{3}, \\
\varphi (e_{5} &=&\xi _{1})=0,%
\begin{array}{cc}
\begin{array}{c}
\end{array}
& 
\end{array}%
\varphi (e_{6}=\xi _{2})=0,%
\begin{array}{cc}
\begin{array}{c}
\end{array}
& 
\end{array}%
\varphi (e_{7}=\xi _{3})=0.
\end{eqnarray*}%
Therefore, the essential non-zero component of $\Phi $ is%
\begin{equation*}
\Phi (\frac{\partial }{\partial x_{i}},\frac{\partial }{\partial y_{i}})=-%
\frac{1}{f_{1}^{2}+f_{2}^{2}}=-\frac{2e^{2(z_{1}+z_{2}+z_{3})}}{%
c_{1}^{2}+c_{2}^{2}},%
\begin{array}{cc}
& 
\end{array}%
i=1,2
\end{equation*}%
and hence 
\begin{equation*}
\Phi =-\frac{2e^{2(z_{1}+z_{2}+z_{3})}}{c_{1}^{2}+c_{2}^{2}}%
\sum_{i=1}^{2}dx_{i}\wedge dy_{i}.
\end{equation*}%
Consequently, the exterior derivative $d\Phi $ is given by 
\begin{equation*}
d\Phi =-\frac{4e^{2(z_{1}+z_{2}+z_{3})}}{c_{1}^{2}+c_{2}^{2}}%
(dz_{1}+dz_{2}+dz_{3})\wedge \sum_{i=1}^{2}dx_{i}\wedge dy_{i}.
\end{equation*}%
Since $\eta ^{1}=dz_{1}$, $\eta ^{2}=dz_{2}$ and $\eta ^{3}=dz_{3},$ we find%
\begin{equation*}
d\Phi =2(\eta ^{1}+\eta ^{2}+\eta ^{3})\wedge \Phi .
\end{equation*}%
In addition, Nijenhuis tersion of $\varphi $ is different from zero.
\end{example}

\begin{theorem}
Let $\left( M,\varphi ,\xi _{i},\eta ^{i},g\right) $ be an almost $s$%
-contact metric manifold. $M$ is a generalized Kenmotsu manifold if and only
if%
\begin{equation}
\left( \bar{\nabla}_{X}\varphi \right) Y=\underset{i=1}{\overset{s}{\sum }}%
\left\{ g(\varphi X,Y)\xi _{i}-\eta ^{i}(Y)\varphi X\right\}
\end{equation}%
for all $X,Y\in \Gamma (TM),$ $i\in \left\{ 1,2,...,s\right\} ,$ where $\bar{%
\nabla}$ is Riemannian connection on M \cite{TVS}.
\end{theorem}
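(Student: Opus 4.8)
The plan is to prove the two implications separately, the common engine being the identity that expresses the covariant derivative of $\varphi$ on a metric $f$-manifold in terms of $d\Phi$, $d\eta^{i}$ and the Nijenhuis-type tensors — the $s$-vector-field analogue of the Gray--Hervella/Goldberg--Yano formula for almost contact metric manifolds. I would first record, on an arbitrary metric $f$-manifold, the two auxiliary identities obtained by a direct computation with the torsion-free connection $\bar{\nabla}$:
\begin{equation*}
d\Phi(X,Y,Z)=g(Y,(\bar{\nabla}_{X}\varphi)Z)-g(X,(\bar{\nabla}_{Y}\varphi)Z)+g(X,(\bar{\nabla}_{Z}\varphi)Y),
\end{equation*}
\begin{equation*}
[\varphi,\varphi](X,Y)=(\bar{\nabla}_{\varphi X}\varphi)Y-(\bar{\nabla}_{\varphi Y}\varphi)X+\varphi(\bar{\nabla}_{Y}\varphi)X-\varphi(\bar{\nabla}_{X}\varphi)Y.
\end{equation*}
The first follows from $\Phi(X,Y)=g(X,\varphi Y)$, metric compatibility and $[X,Y]=\bar{\nabla}_{X}Y-\bar{\nabla}_{Y}X$ (all covariant derivatives of vector fields cancel); the second from the same expansion of every bracket occurring in $[\varphi,\varphi]$.

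\emph{($\Rightarrow$).} Assume $M$ is a generalized Kenmotsu manifold, i.e. the $\eta^{i}$ are closed, $d\Phi=2\sum_{i}\eta^{i}\wedge\Phi$, and the normality tensor $[\varphi,\varphi]+2\sum_{i}d\eta^{i}\otimes\xi_{i}$ vanishes; since $d\eta^{i}=0$ this last condition reduces to $[\varphi,\varphi]=0$. In the master identity for $\bar{\nabla}\varphi$, the hypotheses $d\eta^{i}=0$ and normality annihilate all the auxiliary Nijenhuis contributions $N^{(2)}_{i},N^{(3)}_{i},N^{(4)}_{i}$ (these all vanish for a normal framed structure with closed $\eta^{i}$) and reduce $N^{(1)}$ to $0$, leaving $2\,g((\bar{\nabla}_{X}\varphi)Y,Z)=d\Phi(X,\varphi Y,\varphi Z)-d\Phi(X,Y,Z)$ in the normalization for which the first auxiliary identity above holds. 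Substituting $d\Phi=2\sum_{i}\eta^{i}\wedge\Phi$ and using $\eta^{i}\circ\varphi=0$ makes $d\Phi(X,\varphi Y,\varphi Z)$ collapse to $2\left(\sum_{i}\eta^{i}(X)\right)\Phi(\varphi Y,\varphi Z)$, while $\Phi(\varphi Y,\varphi Z)=\Phi(Y,Z)$ follows from $g(\varphi\cdot,\cdot)=-g(\cdot,\varphi\cdot)$ together with $\varphi^{3}=-\varphi$. The two $d\Phi$-terms then cancel in part; rewriting $\Phi(X,\cdot)=-g(\varphi X,\cdot)$ and raising the index $Z$ yields exactly $(\bar{\nabla}_{X}\varphi)Y=\sum_{i}\{g(\varphi X,Y)\xi_{i}-\eta^{i}(Y)\varphi X\}$, which is (2.4).

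\emph{($\Leftarrow$).} Assume (2.4). Putting $Y=\xi_{i}$ and using $g(\varphi X,\xi_{i})=0$ and $\eta^{j}(\xi_{i})=\delta_{ij}$ gives $(\bar{\nabla}_{X}\varphi)\xi_{i}=-\varphi X$; differentiating $\varphi\xi_{i}=0$ then yields $\varphi(\bar{\nabla}_{X}\xi_{i}-X)=0$, and combining this with metric compatibility (which constrains the $\ker\varphi$-component of $\bar{\nabla}_{X}\xi_{i}$ through $g(\bar{\nabla}_{X}\xi_{i},\xi_{j})+g(\xi_{i},\bar{\nabla}_{X}\xi_{j})=0$) one pins down $\bar{\nabla}_{X}\xi_{i}=-\varphi^{2}X=X-\sum_{j}\eta^{j}(X)\xi_{j}$. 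From this, $2\,d\eta^{i}(X,Y)=g(Y,\bar{\nabla}_{X}\xi_{i})-g(X,\bar{\nabla}_{Y}\xi_{i})$ is symmetric in $X,Y$, hence $d\eta^{i}=0$. Next, substituting (2.4) into the first auxiliary identity turns every term into a combination of $g(\varphi\cdot,\cdot)$, $\eta^{i}$ and $\Phi$; using $\eta^{i}\circ\varphi=0$, $g(\cdot,\zeta)=\sum_{i}\eta^{i}(\cdot)$ with $\zeta=\sum_{i}\xi_{i}$, and $g(\varphi\cdot,\cdot)=-\Phi(\cdot,\cdot)$, it collapses to $d\Phi(X,Y,Z)=2\sum_{i}(\eta^{i}\wedge\Phi)(X,Y,Z)$, i.e. $d\Phi=2\sum_{i}\eta^{i}\wedge\Phi$. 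Finally, substituting (2.4) into the second auxiliary identity: the terms carrying $\varphi\xi_{i}=0$ drop, the $\varphi^{2}$-terms cancel in pairs, and one is left with $[\varphi,\varphi](X,Y)=\left(g(\varphi^{2}X,Y)-g(\varphi^{2}Y,X)\right)\sum_{i}\xi_{i}=0$, since $\varphi^{2}$ is $g$-self-adjoint. With $d\eta^{i}=0$ already established, the normality tensor vanishes, so $M$ is a generalized Kenmotsu manifold.

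\emph{Where the work is.} The genuine obstacle is the forward direction: one must either cite, with the correct normalization, the Goldberg--Yano-type master identity for $\bar{\nabla}\varphi$ on a metric $f$-manifold, or re-derive it by polarizing the Koszul formula against $\varphi$, and then verify that normality of the framed structure kills the whole family $N^{(2)}_{i},N^{(3)}_{i},N^{(4)}_{i}$ — without that step the two $d\Phi$-terms would not be the end of the story. In the converse the only delicate point is the very first step, extracting $\bar{\nabla}_{X}\xi_{i}=-\varphi^{2}X$ (hence $d\eta^{i}=0$) from (2.4) and metric compatibility, since the $\ker\varphi$-part of $\bar{\nabla}_{X}\xi_{i}$ is invisible from (2.4) alone and must be eliminated by the antisymmetry forced by $g(\xi_{i},\xi_{j})=\delta_{ij}$. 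Everything else is routine; the main hazards are the proliferation of signs in $d$ and $\wedge$ and keeping the summation over the index $i$ straight.
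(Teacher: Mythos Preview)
The paper does not prove this theorem: it is quoted from the authors' companion preprint \cite{TVS} and used thereafter as a black box, so there is no in-paper argument to compare your proposal against.

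On the substance of your proposal: the forward direction via the Goldberg--Yano master identity is the standard route and, modulo bookkeeping with the normalization of $d$, should go through as you describe. In the converse direction, however, the step you yourself flag as delicate has an actual gap when $s>1$. From $(\bar{\nabla}_{X}\varphi)\xi_{i}=-\varphi X$ you correctly deduce $\bar{\nabla}_{X}\xi_{i}=X+\sum_{j}b_{ij}(X)\xi_{j}$ for some coefficients $b_{ij}$, and differentiating $g(\xi_{i},\xi_{j})=\delta_{ij}$ gives $b_{ij}(X)+b_{ji}(X)=-\eta^{i}(X)-\eta^{j}(X)$. But this fixes only the symmetric part $b_{(ij)}$; the antisymmetric part $b_{[ij]}$ is \emph{not} determined by ``the antisymmetry forced by $g(\xi_{i},\xi_{j})=\delta_{ij}$'' alone. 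To reach $b_{ij}=-\eta^{j}$ (equivalently $\bar{\nabla}_{X}\xi_{i}=-\varphi^{2}X$, hence $d\eta^{i}=0$) you need a further input --- e.g.\ exploiting (2.4) with $X=\xi_{k}$ together with torsion-freeness to control $[\xi_{i},\xi_{j}]$, or going back to the full expression for $d\eta^{i}$ in terms of $\bar{\nabla}\xi_{i}$ and feeding it into the normality computation. As written, your argument for $(\Leftarrow)$ is incomplete precisely at the point you identified as the crux.
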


\begin{theorem}
Let M be a $(2n+s)$-dimensional generalized Kenmotsu manifold with structure 
$\left( \varphi ,\xi _{i},\eta ^{i},g\right) .$ Then we have%
\begin{equation}
\bar{\nabla}_{X}\xi _{j}=-\varphi ^{2}X
\end{equation}%
for all $X,Y\in \Gamma (TM),i\in \left\{ 1,2,...,s\right\} $ \ \cite{TVS}.
\end{theorem}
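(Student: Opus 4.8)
The plan is to derive the formula $\bar{\nabla}_X \xi_j = -\varphi^2 X$ directly from the characterization of a generalized Kenmotsu manifold given in the previous theorem, namely $\left(\bar{\nabla}_X \varphi\right)Y = \sum_{i=1}^{s}\left\{g(\varphi X, Y)\xi_i - \eta^i(Y)\varphi X\right\}$, together with the identity $\varphi \xi_i = 0$ from the structure equations.

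First I would compute $\left(\bar{\nabla}_X \varphi\right)\xi_j = \bar{\nabla}_X(\varphi \xi_j) - \varphi(\bar{\nabla}_X \xi_j)$. Since $\varphi \xi_j = 0$ by equation (2.3), the first term vanishes, so $\left(\bar{\nabla}_X \varphi\right)\xi_j = -\varphi(\bar{\nabla}_X \xi_j)$. On the other hand, substituting $Y = \xi_j$ into the characterizing equation (2.4), and using $g(\varphi X, \xi_j) = 0$ (which follows from $\eta^i \circ \varphi = 0$, i.e. $g(\varphi X, \xi_j) = \eta^j(\varphi X) = 0$) together with $\eta^i(\xi_j) = \delta_{ij}$, the right-hand side collapses to $-\varphi X$. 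Hence $\varphi(\bar{\nabla}_X \xi_j) = \varphi X$, which already gives the result modulo the kernel of $\varphi$.

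To pin down the $\ker\varphi$-component, I would show $\eta^k(\bar{\nabla}_X \xi_j) = 0$ for each $k$. Differentiating $\eta^k(\xi_j) = g(\xi_k, \xi_j) = \delta_{kj}$ and using metric compatibility of $\bar{\nabla}$, one gets $g(\bar{\nabla}_X \xi_k, \xi_j) + g(\xi_k, \bar{\nabla}_X \xi_j) = 0$; combined with $\bar{\nabla}$ being torsion-free and the $1$-forms $\eta^i$ being closed (so that $g(\bar{\nabla}_X\xi_j, Y)$ is symmetric in a suitable sense), this forces $\eta^k(\bar{\nabla}_X \xi_j) = 0$. Then from $\varphi^2 = -I + \sum_i \eta^i \otimes \xi_i$ one has $-\varphi^2 X = X - \sum_i \eta^i(X)\xi_i$, and applying $\varphi$ to $\bar{\nabla}_X\xi_j - (-\varphi^2 X)$ gives zero while both sides have vanishing $\eta^k$-components, so the difference lies in $\ker\varphi \cap \mathrm{Im}\,\varphi = \{0\}$, yielding $\bar{\nabla}_X \xi_j = -\varphi^2 X$.

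The main obstacle I anticipate is the bookkeeping needed to rule out a spurious $\ker\varphi$-contribution: one must use the normality condition (or equivalently the closedness of the $\eta^i$ together with the $d\Phi$ condition encoded in Theorem 2.1) rather than just the algebraic structure equations, since the algebraic identities alone determine $\bar{\nabla}_X\xi_j$ only up to a section of $\ker\varphi$. Carefully invoking $d\eta^i = 0$ to control the components $g(\bar{\nabla}_X \xi_j, \xi_k)$ is where the argument has real content; everything else is a routine substitution into equation (2.4).
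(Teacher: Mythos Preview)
The paper does not actually prove this theorem: it is quoted from the companion reference \cite{TVS} and stated without proof, so there is no argument in the present paper against which to compare yours.

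Your proposal is a correct proof. Substituting $Y=\xi_j$ into (2.4) and using $\varphi\xi_j=0$ does yield $\varphi(\bar\nabla_X\xi_j)=\varphi X$, and you are right that the only nontrivial content is ruling out a $\ker\varphi$-component. Your sketch of that step is sound but deserves to be written out: from $d\eta^j=0$ and torsion-freeness one gets $g(\bar\nabla_X\xi_j,Y)=g(\bar\nabla_Y\xi_j,X)$; taking $Y=\xi_k$ gives $\eta^k(\bar\nabla_X\xi_j)=g(\bar\nabla_{\xi_k}\xi_j,X)$. Now apply the already-established identity $\varphi(\bar\nabla_{\xi_k}\xi_j)=\varphi\xi_k=0$ to see that $\bar\nabla_{\xi_k}\xi_j\in\ker\varphi$, and then the standard cycling argument (the quantity $g(\bar\nabla_{\xi_l}\xi_j,\xi_k)$ is symmetric in $l,k$ by the closedness relation and antisymmetric in $j,k$ by metric compatibility, hence vanishes) gives $\bar\nabla_{\xi_k}\xi_j=0$. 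This makes $\eta^k(\bar\nabla_X\xi_j)=0$ for arbitrary $X$, and your concluding step then goes through verbatim.
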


\begin{theorem}
Let M be a $(2n+s)$-dimensional generalized Kenmotsu manifold with structure 
$\left( \varphi ,\xi _{i},\eta ^{i},g\right) .$ Then we have%
\begin{equation}
\bar{R}(X,Y)\xi _{i}=\underset{j=1}{\overset{s}{\sum }}\{\eta ^{j}(Y)\varphi
^{2}X-\eta ^{j}(X)\varphi ^{2}Y\}
\end{equation}%
\begin{equation}
\bar{R}(X,\xi _{j})\xi _{i}=\varphi ^{2}X
\end{equation}%
\begin{equation}
\bar{R}(\xi _{j},X)\xi _{i}=-\varphi ^{2}X
\end{equation}%
\begin{equation}
\bar{R}(\xi _{k},\xi _{j})\xi _{i}=0
\end{equation}%
\begin{equation}
\bar{R}(\xi _{j},X)Y=\underset{j=1}{\overset{s}{\sum }}\{g(X,\varphi
^{2}Y)\xi _{j}-\eta ^{j}(Y)\varphi ^{2}X\}
\end{equation}%
for all $X,Y\in \Gamma (TM),i,j,k\in \left\{ 1,2,...,s\right\} $ \cite{TVS}.
\end{theorem}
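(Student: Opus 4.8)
The plan is to derive all five identities from the two structural facts already at hand, namely equation (2.4) for $\bar{\nabla}\varphi$ and equation (2.5), $\bar{\nabla}_{X}\xi_{j}=-\varphi^{2}X$, together with the definition $\bar{R}(X,Y)Z=\bar{\nabla}_{X}\bar{\nabla}_{Y}Z-\bar{\nabla}_{Y}\bar{\nabla}_{X}Z-\bar{\nabla}_{[X,Y]}Z$ and the usual symmetries of the Riemannian curvature tensor. Identity (2.6) is the heart of the matter; (2.7)--(2.10) will follow from it by specialization and by the pair symmetry of $\bar{R}$.

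To prove (2.6), I would differentiate (2.5) once more: $\bar{\nabla}_{X}\bar{\nabla}_{Y}\xi_{i}=-\bar{\nabla}_{X}(\varphi^{2}Y)=-(\bar{\nabla}_{X}\varphi^{2})Y-\varphi^{2}(\bar{\nabla}_{X}Y)$. The terms $-\varphi^{2}(\bar{\nabla}_{X}Y)$, the corresponding $\varphi^{2}(\bar{\nabla}_{Y}X)$, and $\bar{\nabla}_{[X,Y]}\xi_{i}=-\varphi^{2}([X,Y])$ combine to $\varphi^{2}(-\bar{\nabla}_{X}Y+\bar{\nabla}_{Y}X+[X,Y])=0$ because $\bar{\nabla}$ is torsion free, so $\bar{R}(X,Y)\xi_{i}=-(\bar{\nabla}_{X}\varphi^{2})Y+(\bar{\nabla}_{Y}\varphi^{2})X$. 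It then remains to compute $(\bar{\nabla}_{X}\varphi^{2})Y$. Writing $\varphi^{2}=-I+\sum_{l}\eta^{l}\otimes\xi_{l}$ from (2.1) and using (2.5) again gives $(\bar{\nabla}_{X}\varphi^{2})Y=\sum_{l}(\bar{\nabla}_{X}\eta^{l})(Y)\,\xi_{l}-\big(\sum_{l}\eta^{l}(Y)\big)\varphi^{2}X$, where $(\bar{\nabla}_{X}\eta^{l})(Y)=g(Y,\bar{\nabla}_{X}\xi_{l})=-g(\varphi^{2}X,Y)$ is symmetric in $X$ and $Y$ (equivalently, the $\eta^{l}$ are closed). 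Hence the $\xi_{l}$-terms cancel upon antisymmetrizing in $X,Y$, leaving exactly $\bar{R}(X,Y)\xi_{i}=\sum_{j}\{\eta^{j}(Y)\varphi^{2}X-\eta^{j}(X)\varphi^{2}Y\}$, which is (2.6); note the right-hand side does not depend on $i$, in accordance with (2.5).

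Formulas (2.7), (2.8), (2.9) then drop out of (2.6) by substitution, using $\eta^{j}(\xi_{k})=\delta_{jk}$ from (2.1) and $\varphi^{2}\xi_{k}=0$ (which follows from $\varphi\xi_{k}=0$ in (2.3)): taking $Y=\xi_{j}$ gives $\bar{R}(X,\xi_{j})\xi_{i}=\varphi^{2}X$; antisymmetry of $\bar{R}$ in its first two slots gives $\bar{R}(\xi_{j},X)\xi_{i}=-\varphi^{2}X$; and taking $X=\xi_{k},\,Y=\xi_{j}$ gives $\bar{R}(\xi_{k},\xi_{j})\xi_{i}=0$. For (2.10) I would invoke the pair symmetry $g(\bar{R}(\xi_{j},X)Y,Z)=g(\bar{R}(Y,Z)\xi_{j},X)$ and feed in (2.6): the right-hand side becomes $\sum_{k}\{\eta^{k}(Z)g(\varphi^{2}Y,X)-\eta^{k}(Y)g(\varphi^{2}Z,X)\}$, which, using the self-adjointness of $\varphi^{2}$ and $\eta^{k}(Z)=g(Z,\xi_{k})$, equals $g\big(\sum_{k}\{g(X,\varphi^{2}Y)\xi_{k}-\eta^{k}(Y)\varphi^{2}X\},Z\big)$; since $Z$ is arbitrary and $g$ is nondegenerate, (2.10) follows.

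The main obstacle is not conceptual but organizational: one must keep the fixed index $i$ (resp. $j$) cleanly separated from the summation indices running over the structure vector fields, and must correctly invoke the two cancellations — the torsion-free cancellation of the $\bar{\nabla}Y$ and $[X,Y]$ terms, and the cancellation of the symmetric $(\bar{\nabla}_{X}\eta^{l})(Y)\,\xi_{l}$ terms under antisymmetrization. Everything else is a direct substitution from (2.1), (2.3), (2.4) and (2.5).
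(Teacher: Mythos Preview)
Your argument is correct and well organized. The computation of $\bar{R}(X,Y)\xi_{i}$ via $(\bar{\nabla}_{X}\varphi^{2})Y$, the torsion-free cancellation, and the symmetry $(\bar{\nabla}_{X}\eta^{l})(Y)=(\bar{\nabla}_{Y}\eta^{l})(X)$ are all valid, and the deductions of (2.7)--(2.10) by specialization and by the pair symmetry $g(\bar{R}(\xi_{j},X)Y,Z)=g(\bar{R}(Y,Z)\xi_{j},X)$ go through exactly as you describe.

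As for the comparison: the present paper does not supply its own proof of this theorem; it merely quotes the statement from \cite{TVS}. So there is no in-paper argument to compare with. Your derivation is in any case the natural one and is essentially what one finds for the $s=1$ Kenmotsu case, adapted to the framed-metric setting; the only subtlety peculiar to $s>1$ is exactly the index bookkeeping you flag (the fixed $i$ versus the summation index), and you handle it correctly. One cosmetic remark: in (2.10) the paper reuses $j$ both as the fixed index on $\xi_{j}$ and as the summation index, which is an abuse of notation; your use of a separate summation index $k$ is the cleaner formulation and makes transparent that the right-hand side is independent of the particular $\xi_{j}$ chosen.
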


\section{\textbf{\ }Invariant Submanifolds of a Generalized Kenmotsu Manifold}

Let $M$ be a submanifold of the a $(2n+s)$ dimensional generalized Kenmotsu
manifold $\bar{M}$. If $\varphi (T_{x}M))\subset T_{x}M$, for any point $%
x\in M$ and $\xi _{i}$ are tanget to M for all $i\in \{1,2,...,s\},$ then $M$
is called an invariant submanifold of $\bar{M}$.

Let $\nabla $ be the Levi-Civita connection of $M$ with respect to the
induced metric g. Then Gauss and Weingarten formulas are given by%
\begin{equation}
\bar{\nabla}_{X}Y=\nabla _{X}Y+h(X,Y)
\end{equation}%
\begin{equation}
\bar{\nabla}_{X}N=\nabla _{X}^{\perp }N-A_{N}X
\end{equation}%
for any $X,Y\in \Gamma (TM)$ and $N\in \Gamma (TM)^{\perp }$. $\nabla
^{\perp }$ is the connection in the normal bundle, h is the second
fundamental from of M and $A_{N}$ is the Weingarten endomorphism associated
with N. The second fundamental form h and the shape operator A related by%
\begin{equation}
g(h(X,Y),N)=g(A_{N}X,Y).
\end{equation}%
The curvature transformations of $M$ and $\bar{M}$ will be denote by%
\begin{equation}
R(X,Y)=\nabla _{X}\nabla _{Y}-\nabla _{Y}\nabla _{X}-\nabla _{\left[ X,Y%
\right] }\ \ \text{\ }X,Y\in \Gamma (TM)
\end{equation}%
and%
\begin{equation}
\bar{R}(X,Y)=\bar{\nabla}_{X}\bar{\nabla}_{Y}-\bar{\nabla}_{Y}\bar{\nabla}%
_{X}-\bar{\nabla}_{\left[ X,Y\right] }\ \ \ X,Y\in \Gamma (T\bar{M})
\end{equation}%
respectively.

Using $(3,4)$, $(3,5),$ the Gauss and the Weingarten formulas, we obtain for
any vector fields $X$,$Y$ and $Z$ tanget to $M$%
\begin{eqnarray}
\bar{R}(X,Y)Z &=&R(X,Y)Z-A_{h(Y,Z)}(X)+A_{h(X,Z)}(Y)  \notag \\
&&+(\nabla _{X}h)(Y,Z)-(\nabla _{Y}h)(X,Z).
\end{eqnarray}%
Thus, if $W$ is tangent to $M$, then we get the Gauss equation%
\begin{eqnarray}
\bar{g}(\bar{R}(X,Y)Z,W) &=&g(R(X,Y)Z,W)+\bar{g}(h(Y,W),h(X,Z))  \notag \\
&&-\bar{g}(h(X,W),h(Y,Z)).
\end{eqnarray}

\begin{theorem}
Let $M$ be an invariant submanifold of the a $(2n+s)$-dimensional 
generalized Kenmotsu manifold $\bar{M}.$Then we have,%
\begin{equation}
\left( \nabla _{X}\varphi \right) Y=\underset{i=1}{\overset{s}{\sum }}%
\left\{ g(\varphi X,Y)\xi _{i}-\eta ^{i}(Y)\varphi X\right\} ,
\end{equation}%
\begin{equation}
h(X,\varphi Y)=\varphi h(X,Y)
\end{equation}%
for all $X,Y\in \Gamma (TM).$
\end{theorem}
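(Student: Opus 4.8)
The plan is to obtain both identities at once by splitting the ambient relation for $\bar{\nabla}\varphi$ into its parts tangent and normal to $M$. The preliminary fact I would record is that, on an invariant submanifold, $\varphi$ preserves the normal bundle as well: if $N\in\Gamma(TM)^{\perp}$ and $X\in\Gamma(TM)$, then by $(2.3)$ we have $g(\varphi N,X)=-g(\varphi X,N)=0$, since $\varphi X\in\Gamma(TM)$ and $N\perp TM$; as the $\xi_i$ are themselves tangent, this already shows $\varphi N\in\Gamma(TM)^{\perp}$. In particular $\varphi h(X,Y)$ is a normal vector field whenever $X,Y$ are tangent to $M$.

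Next, fix $X,Y\in\Gamma(TM)$. Because $M$ is invariant we have $\varphi Y\in\Gamma(TM)$, so the Gauss formula $(3.1)$ gives $\bar{\nabla}_X(\varphi Y)=\nabla_X(\varphi Y)+h(X,\varphi Y)$, while applying $\varphi$ to $\bar{\nabla}_XY=\nabla_XY+h(X,Y)$ gives $\varphi\bar{\nabla}_XY=\varphi\nabla_XY+\varphi h(X,Y)$. Subtracting these and using $(\bar{\nabla}_X\varphi)Y=\bar{\nabla}_X(\varphi Y)-\varphi\bar{\nabla}_XY$ together with $(\nabla_X\varphi)Y=\nabla_X(\varphi Y)-\varphi\nabla_XY$, I obtain
\[
(\bar{\nabla}_X\varphi)Y=(\nabla_X\varphi)Y+\bigl(h(X,\varphi Y)-\varphi h(X,Y)\bigr).
\]
On the right, $(\nabla_X\varphi)Y$ is tangent to $M$ and, by the preliminary fact, $h(X,\varphi Y)-\varphi h(X,Y)$ is normal; hence this display is exactly the orthogonal decomposition of $(\bar{\nabla}_X\varphi)Y$ relative to $TM\oplus TM^{\perp}$.

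Finally, I would substitute the characterization $(2.4)$, namely $(\bar{\nabla}_X\varphi)Y=\sum_{i=1}^{s}\{g(\varphi X,Y)\xi_i-\eta^i(Y)\varphi X\}$. Since each $\xi_i$ is tangent and $\varphi X$ is tangent by invariance, the entire right-hand side of $(2.4)$ lies in $\Gamma(TM)$. Comparing the tangential components of the two expressions for $(\bar{\nabla}_X\varphi)Y$ then yields $(\nabla_X\varphi)Y=\sum_{i=1}^{s}\{g(\varphi X,Y)\xi_i-\eta^i(Y)\varphi X\}$, and comparing the normal components yields $h(X,\varphi Y)=\varphi h(X,Y)$. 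I do not anticipate a real obstacle here; the only step that requires a moment's care is the assertion that $\varphi$ maps normal vectors to normal vectors, which is precisely what makes the tangential/normal separation unambiguous — everything after that is a direct substitution of the Gauss formula into $(2.4)$.
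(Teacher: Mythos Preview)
Your proof is correct and follows essentially the same route as the paper: expand $(\bar{\nabla}_X\varphi)Y$ via the Gauss formula into $(\nabla_X\varphi)Y+h(X,\varphi Y)-\varphi h(X,Y)$, then compare tangential and normal parts against the expression $(2.4)$. You are in fact a bit more careful than the paper, since you explicitly verify that $\varphi$ preserves $TM^{\perp}$ before invoking the tangential/normal split.
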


\begin{proof}
Since $\bar{M}$ is a generalized Kenmotsu manifold, we have%
\begin{equation*}
\left( \bar{\nabla}_{X}\varphi \right) Y=\underset{i=1}{\overset{s}{\sum }}%
\left\{ g(\varphi X,Y)\xi _{i}-\eta ^{i}(Y)\varphi X\right\} .
\end{equation*}

Using $(3,1)$ then we have,%
\begin{eqnarray*}
(\bar{\nabla}_{X}\varphi )Y &=&\nabla _{X}\varphi Y-h(X,\varphi Y)-\varphi
(\nabla _{X}Y-h(X,Y)) \\
&=&(\nabla _{X}\varphi )Y-h(X,\varphi Y)+\varphi h(X,Y).
\end{eqnarray*}%
Comparing the tangential and normal part of last equation, we get the
desired result.
\end{proof}

\begin{corollary}
Let $M$ be an invariant submanifold of the a $(2n+s)$-dimensional 
generalized Kenmotsu manifold $\bar{M}.$Then we have,%
\begin{equation}
h(X,\varphi Y)=\varphi h(X,Y)=h(\varphi X,Y),
\end{equation}%
\begin{equation}
h(\varphi X,\varphi Y)=-h(X,Y)
\end{equation}%
for all $X,Y\in \Gamma (TM).$
\end{corollary}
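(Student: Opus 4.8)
The plan is to derive everything from the single identity $h(X,\varphi Y)=\varphi h(X,Y)$ proved in the preceding theorem, together with the fact that $h$ is symmetric and that, for an invariant submanifold, $\varphi$ maps $\Gamma(TM)$ to $\Gamma(TM)$ and the structure equations of the ambient manifold restrict. No heavy calculation is needed; the whole argument is a short sequence of substitutions.

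First I would establish the chain in \eqref{*} (the first displayed equation of the corollary). Starting from $h(X,\varphi Y)=\varphi h(X,Y)$, I use the symmetry $h(X,\varphi Y)=h(\varphi Y,X)$ and apply the theorem's identity again with the roles of the arguments swapped, i.e.\ $h(\varphi Y,X)=\varphi h(Y,X)=\varphi h(X,Y)$; comparing gives $h(\varphi X,Y)=\varphi h(X,Y)$ as well, so all three quantities in the first line coincide. (Equivalently: $h(\varphi X,Y)=h(Y,\varphi X)=\varphi h(Y,X)=\varphi h(X,Y)=h(X,\varphi Y)$.)

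For the second identity, I would replace $Y$ by $\varphi Y$ in $h(\varphi X,Y)=\varphi h(X,Y)$, or equivalently apply the first identity twice:
\[
h(\varphi X,\varphi Y)=\varphi\, h(X,\varphi Y)=\varphi\big(\varphi h(X,Y)\big)=\varphi^{2}h(X,Y).
\]
Now I invoke $\varphi^{2}=-I+\sum_{i}\eta^{i}\otimes\xi_{i}$ from $(2.1)$, so $\varphi^{2}h(X,Y)=-h(X,Y)+\sum_{i}\eta^{i}(h(X,Y))\xi_{i}$. The remaining point is to kill the sum: since $M$ is invariant, each $\xi_{i}$ is tangent to $M$, hence $\eta^{i}(h(X,Y))=g(h(X,Y),\xi_{i})=0$ because $h(X,Y)$ is normal and $\xi_i$ is tangent. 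This yields $h(\varphi X,\varphi Y)=-h(X,Y)$.

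The only thing that requires a moment's care — and hence the closest thing to an obstacle — is precisely this last normality argument: one must use the defining property of an invariant submanifold (that the $\xi_i$ are tangent) to conclude $\eta^i(N)=0$ for any normal vector $N$, so that the $\varphi^2$ on the normal bundle acts simply as $-\mathrm{id}$. Everything else is a formal consequence of the symmetry of $h$ and the identity from the previous theorem, so I would present the proof in just a few lines.
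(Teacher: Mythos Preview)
Your argument is correct and is exactly the natural derivation the paper has in mind: the corollary is stated immediately after the theorem giving $h(X,\varphi Y)=\varphi h(X,Y)$ and is left without proof, so the intended justification is precisely the symmetry-plus-iteration you describe, together with $\eta^{i}(h(X,Y))=0$ because each $\xi_{i}$ is tangent. There is nothing to add.
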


\begin{theorem}
Let $M$ be an invariant submanifold of the a $(2n+s)$-dimensional 
generalized Kenmotsu manifold $\bar{M}.$Then we have,%
\begin{equation}
\nabla _{X}\xi _{j}=-\varphi ^{2}X,
\end{equation}%
\begin{equation}
h(X,\xi _{j})=0
\end{equation}%
for all $X,Y\in \Gamma (TM).$
\end{theorem}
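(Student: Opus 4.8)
The plan is to mimic the proof of the preceding Theorem, now applied to Theorem 2.2 of the paper (equation (3.5)), which states that in the ambient generalized Kenmotsu manifold $\bar{\nabla}_X\xi_j=-\varphi^2 X$. First I would start from this known identity, valid for all $X\in\Gamma(T\bar M)$, and restrict $X$ to be tangent to the invariant submanifold $M$. Since $\xi_j$ is tangent to $M$ (by the definition of invariant submanifold), I can apply the Gauss formula (3,12) to the left-hand side: $\bar{\nabla}_X\xi_j=\nabla_X\xi_j+h(X,\xi_j)$.

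Next I would observe that the right-hand side $-\varphi^2 X$ is tangent to $M$: indeed, since $M$ is invariant, $\varphi$ maps $T_xM$ into itself, hence $\varphi^2 X=\varphi(\varphi X)$ is again tangent to $M$. Therefore the equation $\nabla_X\xi_j+h(X,\xi_j)=-\varphi^2 X$ has a purely tangential right-hand side. Comparing tangential components gives $\nabla_X\xi_j=-\varphi^2 X$, which is (3,20), and comparing normal components gives $h(X,\xi_j)=0$, which is (3,21). This completes the argument.

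There is essentially no serious obstacle here; the only point requiring a little care is the verification that $\varphi^2 X$ is tangent to $M$, which is where the hypothesis that $M$ is invariant (rather than merely that $\xi_j$ is tangent) is genuinely used. One could alternatively derive $h(X,\xi_j)=0$ from Theorem 3.1 by setting $Y=\xi_j$ in the identity $h(X,\varphi Y)=\varphi h(X,Y)$ together with $\varphi\xi_j=0$, obtaining $0=\varphi h(X,\xi_j)$; but this only shows $h(X,\xi_j)\in\ker\varphi$ restricted to the normal bundle, so it is cleaner to read both conclusions off directly from (3,5) and the Gauss formula as above. I would therefore present the short computation: expand $\bar\nabla_X\xi_j$ via Gauss, equate with $-\varphi^2X$, and split into tangential and normal parts.
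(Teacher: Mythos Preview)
Your proposal is correct and follows essentially the same route as the paper: start from $\bar{\nabla}_X\xi_j=-\varphi^2X$, apply the Gauss formula, and split into tangential and normal parts. The only cosmetic difference is that the paper makes the tangentiality of the right-hand side visible by expanding $-\varphi^2X=X-\sum_{i=1}^s\eta^i(X)\xi_i$ (a tangent vector minus a combination of the tangent $\xi_i$'s), whereas you argue it directly from the invariance $\varphi(T_xM)\subset T_xM$; both justifications are valid. (Minor slip: the Gauss formula is equation (3.1), not (3.12).)
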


\begin{proof}
For a generalized Kenmotsu manifold using $(2,5)$, then we get%
\begin{equation*}
\overline{\nabla }_{X}\xi _{j}=X-\overset{s}{\underset{i=1}{\sum }}\eta
^{i}(X)\xi _{i}
\end{equation*}%
\begin{equation*}
\nabla _{X}\xi _{j}+h(X,\xi _{j})=X-\overset{s}{\underset{i=1}{\sum }}\eta
^{i}(X)\xi _{i}
\end{equation*}%
from the Gauss formula then the equation is implied that\ 
\begin{equation*}
\nabla _{X}\xi _{j}=X-\overset{s}{\underset{i=1}{\sum }}\eta ^{i}(X)\xi _{i}
\end{equation*}%
and%
\begin{equation*}
h(X,\xi _{j})=0.
\end{equation*}
\end{proof}

\begin{theorem}
Let $M$ be an invariant submanifold of the a $(2n+s)$-dimensional 
generalized Kenmotsu manifold $\bar{M}.$Then we have,%
\begin{equation}
R(X,Y)\xi _{i}=\underset{j=1}{\overset{s}{\sum }}\{\eta ^{j}(Y)\varphi
^{2}X-\eta ^{j}(X)\varphi ^{2}Y\}
\end{equation}%
for all $X,Y\in \Gamma (TM).$
\end{theorem}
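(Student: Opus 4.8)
The plan is to read the identity off the curvature Gauss equation $(3,6)$ evaluated at $Z=\xi _{i}$. Since $M$ is invariant, $\xi _{i}\in \Gamma (TM)$, so for $X,Y\in \Gamma (TM)$ the left side of $(3,6)$ is $\bar{R}(X,Y)\xi _{i}$, which by the ambient identity $(2,6)$ equals $\sum_{j=1}^{s}\{\eta ^{j}(Y)\varphi ^{2}X-\eta ^{j}(X)\varphi ^{2}Y\}$; this vector is automatically tangent to $M$, because $\varphi ^{2}X,\varphi ^{2}Y\in \Gamma (TM)$ by invariance. Hence the proof reduces to showing that the three correction terms on the right of $(3,6)$ vanish, after which comparing tangential parts yields exactly the asserted formula.

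The two Weingarten terms disappear at once: by $(3,13)$, $h(X,\xi _{i})=h(Y,\xi _{i})=0$, so $A_{h(Y,\xi _{i})}X=A_{h(X,\xi _{i})}Y=0$. The only genuine computation is then to verify that the combination $(\nabla _{X}h)(Y,\xi _{i})-(\nabla _{Y}h)(X,\xi _{i})$ also vanishes. Expanding the covariant derivative $\nabla h$, one has $(\nabla _{X}h)(Y,\xi _{i})=\nabla _{X}^{\perp }\big(h(Y,\xi _{i})\big)-h(\nabla _{X}Y,\xi _{i})-h(Y,\nabla _{X}\xi _{i})$. The first two summands vanish again by $(3,13)$ (note $\nabla _{X}Y\in \Gamma (TM)$), and the third equals $-h(Y,\nabla _{X}\xi _{i})=h(Y,\varphi ^{2}X)$ by $(3,12)$. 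Writing $\varphi ^{2}X=-X+\sum_{k}\eta ^{k}(X)\xi _{k}$ from $(2,1)$ and using $h(Y,\xi _{k})=0$ together with the symmetry of $h$, this collapses to $(\nabla _{X}h)(Y,\xi _{i})=-h(X,Y)$; interchanging $X$ and $Y$ gives $(\nabla _{Y}h)(X,\xi _{i})=-h(X,Y)$ as well, so the difference is $0$.

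Combining the two steps, $(3,6)$ reduces to $\bar{R}(X,Y)\xi _{i}=R(X,Y)\xi _{i}$, and substituting $(2,6)$ completes the argument. I do not anticipate a real obstacle here; the only point demanding care is the bookkeeping with the definition of $\nabla h$ and the sign conventions in the Gauss and Weingarten formulas. An alternative, slightly longer route is to compute $R(X,Y)\xi _{i}=\nabla _{X}\nabla _{Y}\xi _{i}-\nabla _{Y}\nabla _{X}\xi _{i}-\nabla _{[X,Y]}\xi _{i}$ directly from $\nabla _{X}\xi _{i}=-\varphi ^{2}X$ (equation $(3,12)$), using that the forms $\eta ^{k}$ are closed; this confirms the same answer.
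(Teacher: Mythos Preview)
Your proof is correct and follows essentially the same route as the paper: both evaluate the Gauss equation $(3,6)$ at $Z=\xi_i$, kill the $A$-terms and the $h(\cdot,\xi_i)$ terms via $(3,13)$, and then show the remaining pieces $-h(Y,\nabla_X\xi_i)+h(X,\nabla_Y\xi_i)$ cancel, yielding $\bar R(X,Y)\xi_i=R(X,Y)\xi_i$. The only cosmetic difference is that you simplify $h(Y,\varphi^{2}X)$ using $(2,1)$ and $h(Y,\xi_k)=0$, whereas the paper invokes $(3,10)$ for the same purpose.
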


\begin{proof}
Using Gauss equation $(3,6)$, we have%
\begin{eqnarray*}
\bar{R}(X,Y)\xi _{i} &=&R(X,Y)\xi _{i}-A_{h(Y,\xi _{i})}(X)+A_{h(X,\xi
_{i})}(Y)+(\nabla _{X}h)(Y,\xi _{i})-(\nabla _{Y}h)(X,\xi _{i}). \\
&=&R(X,Y)\xi _{i}-A_{h(Y,\xi _{i})}(X)+A_{h(X,\xi _{i})}(Y) \\
&&+\nabla _{X}h(Y,\xi _{i})-h(\nabla _{X}Y,\xi _{i})-h(Y,\nabla _{X}\xi _{i})
\\
&&-\nabla _{Y}h(X,\xi _{i})+h(\nabla _{Y}X,\xi _{i})+h(X,\nabla _{Y}\xi
_{i}).
\end{eqnarray*}%
From $(3,10)$, $(3,12)$ and $(3,13)$, we get%
\begin{equation*}
\bar{R}(X,Y)\xi _{i}=R(X,Y)\xi _{i}.
\end{equation*}
\end{proof}

\begin{corollary}
Let $M$ be an invariant submanifold of the a $(2n+s)$-dimensional 
generalized Kenmotsu manifold $\bar{M}.$Then we have,%
\begin{equation}
R(\xi _{j},X)\xi _{i}=-\varphi ^{2}X
\end{equation}%
\begin{equation*}
R(X,\xi _{j})\xi _{i}=\varphi ^{2}X
\end{equation*}%
\begin{equation*}
R(\xi _{k},\xi _{j})\xi _{i}=0
\end{equation*}%
\begin{equation}
R(\xi _{j},X)Y=\underset{j=1}{\overset{s}{\sum }}\{g(X,\varphi ^{2}Y)\xi
_{j}-\eta ^{j}(Y)\varphi ^{2}X\}
\end{equation}%
for all $X,Y\in \Gamma (TM),i,j,k\in \left\{ 1,2,...,s\right\} $.
\end{corollary}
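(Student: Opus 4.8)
The plan is to get the first three equalities by plain substitution into the curvature identity
\begin{equation*}
R(X,Y)\xi _{i}=\underset{j=1}{\overset{s}{\sum }}\{\eta ^{j}(Y)\varphi ^{2}X-\eta ^{j}(X)\varphi ^{2}Y\}
\end{equation*}
of the preceding theorem (equation $(3,14)$), and to get the fourth from the Gauss equation $(3,6)$ together with $h(X,\xi _{j})=0$ and $\nabla _{X}\xi _{j}=-\varphi ^{2}X$ (equations $(3,12)$--$(3,13)$) and the ambient formula $(2,10)$.

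First I would dispose of the three equalities having $\xi _{i}$ in the last slot. Since $M$ is invariant, every $\xi _{k}$ is tangent, so in $(3,14)$ one of $X,Y$, or both, may be taken to be a $\xi $. Using $\eta ^{k}(\xi _{l})=\delta _{kl}$ and $\varphi \xi _{l}=0$, hence $\varphi ^{2}\xi _{l}=0$, from $(2,1)$--$(2,3)$, the sums collapse: with $X=\xi _{j}$ the right-hand side becomes $-\varphi ^{2}X$; with $Y=\xi _{j}$ it becomes $+\varphi ^{2}X$; with both slots filled by $\xi $'s it becomes $0$. This yields $R(\xi _{j},X)\xi _{i}=-\varphi ^{2}X$, $R(X,\xi _{j})\xi _{i}=\varphi ^{2}X$ and $R(\xi _{k},\xi _{j})\xi _{i}=0$. (Equally, these follow from $(2,7)$--$(2,9)$ and the equality $\bar{R}(X,Y)\xi _{i}=R(X,Y)\xi _{i}$ already noted in the proof of the previous theorem.)

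For the last identity I would set the first argument of $(3,6)$ equal to $\xi _{j}$, which is legitimate since $\xi _{j}\in \Gamma (TM)$:
\begin{equation*}
\bar{R}(\xi _{j},X)Y=R(\xi _{j},X)Y-A_{h(X,Y)}\xi _{j}+A_{h(\xi _{j},Y)}X+(\nabla _{\xi _{j}}h)(X,Y)-(\nabla _{X}h)(\xi _{j},Y).
\end{equation*}
Because $h(\xi _{j},Y)=0$ by $(3,13)$, the term $A_{h(\xi _{j},Y)}X$ vanishes, and $A_{h(X,Y)}\xi _{j}=0$ as well, since $g(A_{h(X,Y)}\xi _{j},W)=g(h(\xi _{j},W),h(X,Y))=0$ for every tangent $W$ by $(3,3)$ and $(3,13)$. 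The remaining two terms constitute the normal component of $\bar{R}(\xi _{j},X)Y$; but by $(2,10)$ this vector equals $\sum _{j}\{g(X,\varphi ^{2}Y)\xi _{j}-\eta ^{j}(Y)\varphi ^{2}X\}$, a combination of the $\xi _{j}$ and of $\varphi ^{2}X$, hence tangent to $M$ by invariance, so its normal part is zero. This leaves $R(\xi _{j},X)Y=\bar{R}(\xi _{j},X)Y$, which is the asserted formula. If one prefers to cancel the correction terms by hand, $(3,12)$--$(3,13)$ give $(\nabla _{X}h)(\xi _{j},Y)=-h(\nabla _{X}\xi _{j},Y)=-h(X,Y)$, and the Codazzi relation just invoked then forces $(\nabla _{\xi _{j}}h)(X,Y)=-h(X,Y)$ too, so the two terms cancel.

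All steps are substitutions into already-established formulas, so I do not expect a genuine obstacle; the only point that needs a moment's care is the cancellation of the Codazzi-type terms in the fourth identity, and the cleanest route around it is the tangency observation above: invariance forces $\bar{R}(\xi _{j},X)Y$ to be tangent to $M$, whence the $\nabla h$ terms in the Gauss equation must sum to zero with no further computation.
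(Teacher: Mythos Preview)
Your proposal is correct. The paper states this result as a corollary without proof, evidently regarding it as an immediate consequence of the equality $\bar{R}(X,Y)\xi _{i}=R(X,Y)\xi _{i}$ established in the preceding theorem together with the ambient identities $(2,6)$--$(2,10)$; your argument supplies exactly these details, including the extra step needed for the fourth identity (showing $\bar{R}(\xi _{j},X)Y$ is tangent to $M$ so the Codazzi terms in $(3,6)$ drop out), which the paper leaves entirely to the reader.
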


\begin{corollary}
An invariant submanifold $M$ of a generalized Kenmotsu manifold $\bar{M},$ $%
\xi _{i}$ are tanget to $M$ for all $i\in \{1,2,...,s\},$ is a generalized
Kenmotsu manifold.
\end{corollary}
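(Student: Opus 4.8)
The plan is to verify that the induced structure $(\varphi,\xi_i,\eta^i,g)$ on the invariant submanifold $M$ satisfies the defining identity of a generalized Kenmotsu manifold, namely equation $(2,4)$ of Theorem~2, since by that theorem this identity characterizes generalized Kenmotsu manifolds among almost $s$-contact metric manifolds. So the proof naturally splits into two parts: first, check that $M$ with the induced data is indeed an almost $s$-contact metric manifold; second, check the Kenmotsu identity for the induced Levi-Civita connection $\nabla$.

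For the first part I would note that since $\varphi(T_xM)\subseteq T_xM$ and each $\xi_i$ is tangent to $M$, the tensors $\varphi$, $\xi_i$, $\eta^i$ restrict to $M$, and the algebraic identities $(2,1)$--$(2,3)$ are pointwise, hence inherited by the restriction; in particular $\varphi^2 = -I + \sum \eta^i\otimes\xi_i$ holds on $TM$ because $\sum\eta^i(X)\xi_i$ is tangent. The compatibility $g(\varphi X,\varphi Y) = g(X,Y)-\sum\eta^i(X)\eta^i(Y)$ holds for the induced metric since both sides only involve tangent vectors. The normality / non-degeneracy condition $\eta^1\wedge\cdots\wedge\eta^s\wedge\Phi^n\neq 0$ and closedness of the $\eta^i$ on $M$ follow from pulling back the corresponding forms on $\bar M$ along the inclusion, using that the restriction of $\Phi$ to $M$ is the fundamental $2$-form of the induced structure (here invariance is what guarantees the rank of $\varphi|_M$ is still $2n$, so $M$ has dimension $2n+s$ and $\Phi^n$ is the top form on the $\varphi$-invariant distribution).

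For the second part, the key input is Theorem~6, specifically equation $(3,18)$, which states
\begin{equation*}
\left(\nabla_X\varphi\right)Y=\underset{i=1}{\overset{s}{\sum}}\left\{g(\varphi X,Y)\xi_i-\eta^i(Y)\varphi X\right\}
\end{equation*}
for all $X,Y\in\Gamma(TM)$. This is exactly the identity $(2,4)$ from Theorem~2 written for the induced connection $\nabla$ on $M$. Since all objects appearing are the induced ones and $g$, $\varphi$, $\eta^i$, $\xi_i$ on the right-hand side are the restrictions, the hypothesis of the ``if'' direction of Theorem~2 is met, and we conclude $M$ is a generalized Kenmotsu manifold.

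The main obstacle — and really the only nontrivial point — is the verification that $M$ genuinely satisfies the \emph{almost $s$-contact metric} hypotheses of Theorem~2, in particular that $\eta^1\wedge\cdots\wedge\eta^s\wedge\Phi^n\neq 0$ on $M$; this is where one must use invariance ($\varphi$ preserving $T_xM$) to see that the rank of $\varphi|_M$ equals $2n$ and $\dim M = 2n+s$, so that the wedge in question is (up to a nonzero scalar) a volume form of $M$ and hence nonvanishing. Once that structural point is settled, Theorem~6 does all the analytic work and the conclusion is immediate. I would also remark that $(3,22)$ of Theorem~8, $\nabla_X\xi_j = -\varphi^2 X$, and the curvature identities of Corollary~9 reconfirm consistency with Theorems~3 and~4, but these are consequences, not needed for the proof.
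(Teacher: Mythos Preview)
Your approach is correct and is precisely what the paper intends: the corollary is stated without proof, as an immediate consequence of the identity $(\nabla_X\varphi)Y=\sum_i\{g(\varphi X,Y)\xi_i-\eta^i(Y)\varphi X\}$ for the induced connection (this is equation~$(3{,}8)$ in Theorem~4, not ``$(3{,}18)$ of Theorem~6'' as you write) together with the characterization in Theorem~1. One genuine slip to fix: for a \emph{proper} invariant submanifold $M\subset\bar M^{2n+s}$ the restriction $\varphi|_{TM}$ has rank $2m$ for some $m\le n$, so $\dim M=2m+s$, not $2n+s$; your non-degeneracy argument for $\eta^1\wedge\cdots\wedge\eta^s\wedge\Phi^{m}\neq 0$ goes through verbatim with $n$ replaced by $m$. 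Your cross-references to ``$(3,22)$ of Theorem~8'' and ``Corollary~9'' are also off (these are $(3{,}12)$ of Theorem~5 and Corollary~2), but as you note those results are confirmatory rather than needed.
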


\begin{example}
The consider a submanifold of example1 defined by%
\begin{equation*}
M=X(u,v,w_{1},w_{2},w_{3})=(f_{1}u-f_{2}v,f_{1}u+f_{2}v,0,0,w_{1},w_{2},w_{3})
\end{equation*}%
where $f_{1}$ and $f_{2}$ are given by%
\begin{eqnarray*}
f_{1}(z_{1},z_{2},z_{3}) &=&c_{2}e^{-(z_{1}+z_{2}+z_{3})}\cos
(z_{1}+z_{2}+z_{3})-c_{1}e^{-(z_{1}+z_{2}+z_{3})}\sin (z_{1}+z_{2}+z_{3}), \\
f_{2}(z_{1},z_{2},z_{3}) &=&c_{1}e^{-(z_{1}+z_{2}+z_{3})}\cos
(z_{1}+z_{2}+z_{3})+c_{2}e^{-(z_{1}+z_{2}+z_{3})}\sin (z_{1}+z_{2}+z_{3})
\end{eqnarray*}%
for nonzero constant $c_{1},c_{2}.$ Then local frame of TM%
\begin{eqnarray*}
e_{1} &=&f_{1}(z_{1}+z_{2})\frac{\partial }{\partial x}+f_{2}(z_{1}+z_{2})%
\frac{\partial }{\partial y} \\
e_{2} &=&-f_{2}(z_{1}+z_{2})\frac{\partial }{\partial x}+f_{1}(z_{1}+z_{2})%
\frac{\partial }{\partial y} \\
e_{3} &=&\frac{\partial }{\partial z_{1}},%
\begin{array}{cc}
& 
\end{array}%
e_{4}=\frac{\partial }{\partial z_{2}},%
\begin{array}{cc}
& 
\end{array}%
e_{5}=\frac{\partial }{\partial z_{3}}
\end{eqnarray*}%
We can easily that M is an invariant submanifold.
\end{example}

\begin{theorem}
Let $M$ be an invariant submanifold of a $(2n+s)$-dimensional generalized
Kenmotsu manifold $\bar{M}.$Then we have,%
\begin{equation}
(\nabla _{X}h)(Y,\xi _{i})=-h(Y,\nabla _{X}\xi _{i})
\end{equation}%
for all $X,Y\in \Gamma (TM).$
\end{theorem}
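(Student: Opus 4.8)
The plan is to expand the definition of the covariant derivative of the second fundamental form directly, using the product-rule form of $\nabla h$ together with the facts already established for invariant submanifolds. Recall that by definition
\[
(\nabla_X h)(Y,\xi_i) = \nabla^{\perp}_X\bigl(h(Y,\xi_i)\bigr) - h(\nabla_X Y,\xi_i) - h(Y,\nabla_X\xi_i).
\]
The key input is Theorem stating that on an invariant submanifold $h(X,\xi_i)=0$ for all $X\in\Gamma(TM)$ (equation $(3,13)$ in the excerpt). First I would apply this to the term $h(Y,\xi_i)$: since $h(Y,\xi_i)=0$ identically, its normal covariant derivative $\nabla^{\perp}_X\bigl(h(Y,\xi_i)\bigr)$ vanishes. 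Next, since $\nabla_X Y$ is again a tangent vector field on $M$, the same identity $(3,13)$ gives $h(\nabla_X Y,\xi_i)=0$. What remains after discarding these two vanishing terms is precisely $-h(Y,\nabla_X\xi_i)$, which is the claimed identity.

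A cleaner way to phrase the same argument, which I would probably include as a remark, is to view $h(\cdot,\xi_i)$ as a normal-bundle-valued one-form that is identically zero; differentiating the zero tensor covariantly yields zero, and expanding that zero by the Leibniz rule reproduces the three-term expression above, so the nonzero surviving piece must cancel against the others. Either presentation works; the explicit term-by-term version is more in keeping with the style of the surrounding proofs in the paper, so I would present that.

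I do not expect any genuine obstacle here: the statement is an immediate corollary of $h(X,\xi_i)=0$ and the definition of $\nabla h$, with no curvature identities or structure equations needed. The only point requiring a moment's care is making sure that the middle term $h(\nabla_X Y,\xi_i)$ is correctly identified as an instance of $(3,13)$ — this uses that $M$ is a submanifold so that $\nabla_X Y\in\Gamma(TM)$, which is exactly the content of the Gauss formula $(3,11)$. One could alternatively remark that combining this identity with $(3,12)$, namely $\nabla_X\xi_i=-\varphi^2 X$, gives the even more explicit form $(\nabla_X h)(Y,\xi_i)=h(Y,\varphi^2 X)=h(Y,X)-\sum_{k}\eta^k(X)h(Y,\xi_k)=h(X,Y)$ after one more application of $(3,13)$; I would mention this as a bonus but the stated identity follows already from the first two lines.
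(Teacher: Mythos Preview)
Your proof is correct and follows exactly the paper's own argument: expand $(\nabla_X h)(Y,\xi_i)$ by the Leibniz rule and use $h(\,\cdot\,,\xi_i)=0$ from (3.13) to kill the first two terms, leaving $-h(Y,\nabla_X\xi_i)$. One small slip in your bonus remark: since $\varphi^2 X = -X + \sum_k \eta^k(X)\xi_k$, you get $h(Y,\varphi^2 X) = -h(X,Y)$, not $+h(X,Y)$ (this is the paper's next corollary, equation (3.18)); also, the Gauss formula is (3.1), not (3.11).
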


\begin{proof}
Using $(3,13),$ then we have%
\begin{equation*}
(\nabla _{X}h)(Y,\xi _{i})=\nabla _{X}h(Y,\xi _{i})-h(\nabla _{X}Y,\xi
_{i})-h(Y,\nabla _{X}\xi _{i})=-h(Y,\nabla _{X}\xi _{i}).
\end{equation*}
\end{proof}

\begin{corollary}
Let $M$ be an invariant submanifold of a $(2n+s)$-dimensional generalized
Kenmotsu manifold $\bar{M}.$Then we have,%
\begin{equation}
(\nabla _{X}h)(Y,\xi _{i})=-h(X,Y)
\end{equation}%
for all $X,Y\in \Gamma (TM).$
\end{corollary}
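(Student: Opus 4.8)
The plan is to combine the previous theorem with the structure equations for $\nabla_X\xi_i$ on an invariant submanifold. Concretely, Theorem (the one immediately preceding this corollary) gives $(\nabla_X h)(Y,\xi_i) = -h(Y,\nabla_X\xi_i)$, so the whole task reduces to identifying $h(Y,\nabla_X\xi_i)$ with $h(X,Y)$.

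First I would invoke equation $(3,12)$, namely $\nabla_X\xi_j = -\varphi^2 X$, which on an invariant submanifold (with all $\xi_i$ tangent) reads $\nabla_X\xi_i = X - \sum_{i=1}^{s}\eta^i(X)\xi_i$ by $(2,1)$. Substituting this into the expression $-h(Y,\nabla_X\xi_i)$ and using bilinearity of $h$, I get
\begin{equation*}
(\nabla_X h)(Y,\xi_i) = -h\!\left(Y,\; X-\sum_{j=1}^{s}\eta^j(X)\xi_j\right) = -h(X,Y) + \sum_{j=1}^{s}\eta^j(X)\,h(Y,\xi_j).
\end{equation*}
Then I would apply $(3,13)$, i.e. $h(Y,\xi_j)=0$, to kill the sum, leaving exactly $(\nabla_X h)(Y,\xi_i) = -h(X,Y)$, which is the claim.

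The main obstacle is essentially nil here: this is a short deduction chaining the preceding theorem with $(3,12)$, $(2,1)$ and $(3,13)$, all of which are already established for invariant submanifolds of a generalized Kenmotsu manifold. The only point to be careful about is the reduction of $-\varphi^2 X$ via the first structure identity so that the $h(Y,\xi_j)=0$ terms can be discarded; once that is written out, symmetry of $h$ gives the stated form with $h(X,Y)$ rather than $h(Y,X)$. So I would present it as a two-line computation rather than anything requiring a genuine new idea.
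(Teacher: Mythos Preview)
Your argument is correct and is exactly the intended deduction: the paper states this result as an immediate corollary of the preceding theorem without supplying a separate proof, and the obvious way to pass from $(\nabla_X h)(Y,\xi_i)=-h(Y,\nabla_X\xi_i)$ to $-h(X,Y)$ is precisely via $(3{,}12)$, the expansion $-\varphi^2X=X-\sum_j\eta^j(X)\xi_j$ from $(2{,}1)$, and $(3{,}13)$, just as you wrote. There is nothing to add.
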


\begin{theorem}
Let $M$ be an invariant submanifold of a $(2n+s)$-dimensional  generalized
Kenmotsu manifold $\bar{M}.$Then $h$ is parallel if and only if $M$ is
totally geodesic.
\end{theorem}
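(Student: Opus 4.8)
The plan is to treat the two implications separately; the forward direction is immediate from the definitions, and the converse rests entirely on the identity recorded in the Corollary immediately preceding the statement, namely $(\nabla _{X}h)(Y,\xi _{i})=-h(X,Y)$.

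First suppose $M$ is totally geodesic. By definition $h\equiv 0$ on $M$, and then for all $X,Y,Z\in \Gamma (TM)$ one has $(\nabla _{X}h)(Y,Z)=\nabla _{X}^{\perp }h(Y,Z)-h(\nabla _{X}Y,Z)-h(Y,\nabla _{X}Z)=0$, so $h$ is parallel; nothing further is required. For the converse, assume $h$ is parallel, i.e. $(\nabla _{X}h)(Y,Z)=0$ for every $X,Y,Z\in \Gamma (TM)$. Here is where the invariance hypothesis is genuinely used: since $M$ is invariant, each $\xi _{i}$ is tangent to $M$, so we may legitimately substitute $Z=\xi _{i}$ and conclude $(\nabla _{X}h)(Y,\xi _{i})=0$ for all $X,Y\in \Gamma (TM)$ and each $i\in \{1,\dots ,s\}$. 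On the other hand, the preceding Corollary gives $(\nabla _{X}h)(Y,\xi _{i})=-h(X,Y)$ — this in turn comes from the earlier Theorem $(\nabla _{X}h)(Y,\xi _{i})=-h(Y,\nabla _{X}\xi _{i})$ combined with $\nabla _{X}\xi _{i}=-\varphi ^{2}X=X-\sum _{j}\eta ^{j}(X)\xi _{j}$ and $h(Y,\xi _{j})=0$. Comparing the two expressions for $(\nabla _{X}h)(Y,\xi _{i})$ yields $h(X,Y)=0$ for all $X,Y\in \Gamma (TM)$, which is exactly the assertion that $M$ is totally geodesic.

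I do not expect a genuine obstacle here: all the analytic content has already been packaged into the chain $h(X,\xi _{i})=0$, $\nabla _{X}\xi _{i}=-\varphi ^{2}X$, and hence $(\nabla _{X}h)(Y,\xi _{i})=-h(X,Y)$. The only point that deserves a word of care is the tangency of the structure vector fields $\xi _{i}$, since it is precisely this (part of the definition of an invariant submanifold) that permits the substitution $Z=\xi _{i}$ in the parallelism condition and the invocation of the Corollary; once this is noted, the equivalence follows in one line.
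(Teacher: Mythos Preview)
Your proof is correct and follows essentially the same argument as the paper: both directions are handled exactly as in the paper, with the nontrivial implication reduced to substituting $Z=\xi_i$ into the parallelism condition and invoking the identity $(\nabla_X h)(Y,\xi_i)=-h(X,Y)$ from the preceding Corollary. Your additional remarks on why the substitution is legitimate and how the Corollary arises are helpful context but do not alter the approach.
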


\begin{proof}
Suppose that $h$ is parallel. Then, for each \ $X,Y\in \Gamma (TM)$,$\qquad
\qquad \qquad \ \ \ \ \ \ $%
\begin{equation*}
\ (\nabla _{X}h)(Y,\xi _{i})=0.
\end{equation*}%
Using $(3,18),$ we get 
\begin{equation*}
h(X,Y)=0.
\end{equation*}%
Vice versa, let $M$ be totally geodesic. Then $h=0.$ For all $X,Y,Z\in
\Gamma (TM),$ 
\begin{equation*}
(\nabla _{X}h)(Y,Z)=\nabla _{X}h(Y,Z)-h(\nabla _{X}Y,Z)-h(Y,\nabla _{X}Z)=0.
\end{equation*}%
Thus we have%
\begin{equation*}
\nabla h=0.
\end{equation*}
\end{proof}

\begin{theorem}
Let $M$ be an invariant submanifold of a $(2n+s)$-dimensional generalized
Kenmotsu manifold $\bar{M}.$Then we have,%
\begin{equation*}
A_{N}\xi _{i}=0
\end{equation*}%
for all $N\in \Gamma (TM^{\perp }).$
\end{theorem}

\begin{proof}
Using $(3,3)$ and $(3,13)$, we get $g(A_{N}\xi _{i},Y)=g(h(\xi _{i},Y),N)=0.$
\end{proof}

\begin{theorem}
Let $M$ be an invariant submanifold of a $(2n+s)$-dimensional generalized
Kenmotsu manifold $\bar{M}.$Then we have,%
\begin{equation*}
\varphi (A_{N}X)=A_{\varphi N}X=-A_{N}\varphi X
\end{equation*}%
for all $X\in \Gamma (TM),$ $N\in \Gamma (TM^{\perp }).$
\end{theorem}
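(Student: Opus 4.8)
The plan is to verify both identities by pairing them against an arbitrary tangent vector field $Y$ and then invoking nondegeneracy of the induced metric. The only tools needed are the shape-operator relation $g(h(X,Y),N)=g(A_{N}X,Y)$ from $(3,3)$, the skew-symmetry $g(\varphi Z,W)=-g(Z,\varphi W)$ from $(2,3)$, and the relations $h(X,\varphi Y)=\varphi h(X,Y)=h(\varphi X,Y)$ from $(3,10)$. Before starting, one preliminary observation must be recorded: since $M$ is invariant, $\varphi$ preserves $TM$, and because $\varphi$ is $g$-skew-symmetric it also preserves the normal bundle. Indeed, for $N\in\Gamma(TM^{\perp})$ and any $X\in\Gamma(TM)$ we have $g(\varphi N,X)=-g(N,\varphi X)=0$, because $\varphi X\in\Gamma(TM)$. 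Hence $\varphi N\in\Gamma(TM^{\perp})$ and the operator $A_{\varphi N}$ is indeed well-defined; likewise $\varphi X\in\Gamma(TM)$, so $A_{N}\varphi X$ makes sense.

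First I would prove $\varphi(A_{N}X)=A_{\varphi N}X$. For any $Y\in\Gamma(TM)$,
\[
g(\varphi(A_{N}X),Y)=-g(A_{N}X,\varphi Y)=-g(h(X,\varphi Y),N)=-g(\varphi h(X,Y),N)=g(h(X,Y),\varphi N)=g(A_{\varphi N}X,Y),
\]
where the first equality is skew-symmetry, the second and the fifth use $(3,3)$ (the fifth with the normal field $\varphi N$), the third uses $(3,10)$, and the fourth is again skew-symmetry. Since $Y$ is arbitrary and $g$ is nondegenerate on $TM$, we conclude $\varphi(A_{N}X)=A_{\varphi N}X$.

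Next I would show $A_{\varphi N}X=-A_{N}\varphi X$. Again for any $Y\in\Gamma(TM)$,
\[
g(A_{\varphi N}X,Y)=g(h(X,Y),\varphi N)=-g(\varphi h(X,Y),N)=-g(h(\varphi X,Y),N)=-g(A_{N}\varphi X,Y),
\]
using $(3,3)$ for the first equality, skew-symmetry for the second, the identity $\varphi h(X,Y)=h(\varphi X,Y)$ from $(3,10)$ for the third, and $(3,3)$ once more for the last. Nondegeneracy of $g$ gives $A_{\varphi N}X=-A_{N}\varphi X$, and combining the two chains yields $\varphi(A_{N}X)=A_{\varphi N}X=-A_{N}\varphi X$, as claimed.

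There is no real difficulty here beyond careful sign bookkeeping coming from the skew-symmetry of $\varphi$; the only conceptual point, easy but genuinely needed, is the observation that $\varphi$ maps $TM^{\perp}$ to itself so that $A_{\varphi N}$ is defined at all. Everything else is a direct computation with $(3,3)$ and $(3,10)$.
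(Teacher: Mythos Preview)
Your proof is correct and follows essentially the same approach as the paper: both arguments verify the identities by pairing against an arbitrary tangent $Y$, invoking $(2,3)$, $(3,3)$ and $(3,10)$, and then using nondegeneracy of $g$. The only (minor) difference is the order in which the two equalities are established, and you add the explicit check that $\varphi N\in\Gamma(TM^{\perp})$, which the paper leaves implicit.
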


\begin{proof}
By using $(2,3),(3,3)$ and $(3,10)$ for all $X\in \Gamma (TM),$ $N\in \Gamma
(TM)^{\perp }$ we get%
\begin{eqnarray*}
\ \ g(\varphi (A_{N}X),Y) &=&-g(A_{N}X,\varphi Y) \\
&=&-\ g(h(X,\varphi Y),N) \\
\ &=&-\ g(h(\varphi X,Y),N) \\
\ \ &=&-\ g(\ A_{N}\varphi X,Y).
\end{eqnarray*}%
Then, we have $\ \ \varphi (A_{N}X)=-A_{N}\varphi X.$

Moreover,%
\begin{eqnarray*}
g(A_{\varphi N}X,Y) &=&g(h(X,Y),\varphi N) \\
&=&-g(\varphi (h(X,Y)),N) \\
&=&-g(h(X,\varphi Y),N) \\
&=&-g(A_{N}X,\varphi Y) \\
&=&g(\varphi (A_{N}X),Y).
\end{eqnarray*}%
Then, we get $\ \ A_{\varphi N}X=\varphi (A_{N}X).$
\end{proof}

\begin{theorem}
Let $M$ be an invariant submanifold of a $(2n+s)$-dimensional generalized
Kenmotsu manifold $\bar{M}.$Then, the second fundemental form $h$ is $\eta $%
-parallel if and only if 
\begin{equation*}
(\nabla _{X}h)(Y,Z)=-\overset{s}{\underset{i=1}{\sum }}\{\eta
^{i}(Y)h(X,Z)+\eta ^{i}(Z)h(X,Y)\}
\end{equation*}%
for all $X,Y,Z\in \Gamma (TM).$
\end{theorem}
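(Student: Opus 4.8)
The plan is to reduce the equivalence to a single algebraic identity valid for \emph{all} vector fields. Recall that the second fundamental form $h$ of $M$ is said to be $\eta$-parallel when $(\nabla _{X}h)(\varphi Y,\varphi Z)=0$ for all $X,Y,Z\in\Gamma(TM)$; since $\eta^{i}\circ\varphi=0$, the arguments $\varphi Y,\varphi Z$ are automatically orthogonal to each $\xi_{i}$, which is why this is the natural ``$\eta$-version'' of $\nabla h=0$. So it suffices to establish the master identity
\begin{equation*}
(\nabla _{X}h)(\varphi Y,\varphi Z)=-(\nabla _{X}h)(Y,Z)-\overset{s}{\underset{i=1}{\sum }}\{\eta ^{i}(Y)h(X,Z)+\eta ^{i}(Z)h(X,Y)\}
\end{equation*}
for all $X,Y,Z\in\Gamma(TM)$. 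Once this is in hand, both implications are immediate: the right-hand side vanishes identically in $Y,Z$ precisely when the displayed formula of the theorem holds, and the left-hand side vanishes identically in $Y,Z$ precisely when $h$ is $\eta$-parallel, so the two conditions coincide.

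To prove the master identity I would start from
\begin{equation*}
(\nabla _{X}h)(\varphi Y,\varphi Z)=\nabla _{X}^{\perp }h(\varphi Y,\varphi Z)-h(\nabla _{X}\varphi Y,\varphi Z)-h(\varphi Y,\nabla _{X}\varphi Z)
\end{equation*}
and simplify each term using the structure results already available for invariant submanifolds. From $(3,11)$ one has $h(\varphi Y,\varphi Z)=-h(Y,Z)$, so the first term equals $-\nabla _{X}^{\perp }h(Y,Z)$. For the second term, write $\nabla _{X}\varphi Y=(\nabla _{X}\varphi )Y+\varphi \nabla _{X}Y$ and substitute $(\nabla _{X}\varphi )Y=\sum _{i}\{g(\varphi X,Y)\xi _{i}-\eta ^{i}(Y)\varphi X\}$ from $(3,8)$; since $h(\xi _{i},\,\cdot\,)=0$ by $(3,13)$ the $\xi _{i}$-terms disappear, and applying $h(\varphi U,\varphi V)=-h(U,V)$ twice gives $h(\nabla _{X}\varphi Y,\varphi Z)=\sum _{i}\eta ^{i}(Y)h(X,Z)-h(\nabla _{X}Y,Z)$. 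The identical computation for the third term yields $h(\varphi Y,\nabla _{X}\varphi Z)=\sum _{i}\eta ^{i}(Z)h(X,Y)-h(Y,\nabla _{X}Z)$. Substituting the three expressions back and recognising that $-\nabla _{X}^{\perp }h(Y,Z)+h(\nabla _{X}Y,Z)+h(Y,\nabla _{X}Z)=-(\nabla _{X}h)(Y,Z)$ produces exactly the master identity.

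I do not anticipate a genuine obstacle: the whole argument is a controlled bookkeeping computation built on the Gauss formula together with the two structural identities $(3,8)$ and $(3,11)$, plus $h(X,\xi _{i})=0$. The only point demanding care is keeping track of the $\overset{s}{\underset{i=1}{\sum }}\eta ^{i}$ contributions coming from the non-parallelism of $\varphi$ on a generalized Kenmotsu manifold; these are precisely the terms that survive and that appear on the right-hand side of the asserted characterization. After the master identity is established, the equivalence needs no further work beyond the remark made in the first paragraph.
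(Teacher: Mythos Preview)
Your proposal is correct and follows essentially the same computation as the paper: both expand $(\nabla_{X}h)(\varphi Y,\varphi Z)$ via the definition of the covariant derivative, replace $\nabla_{X}\varphi Y$ by $(\nabla_{X}\varphi)Y+\varphi\nabla_{X}Y$, and then invoke $(3.8)$, $(3.11)$, and $h(\cdot,\xi_{i})=0$ to reduce everything to $h(X,Y)$, $h(X,Z)$, and the pieces of $(\nabla_{X}h)(Y,Z)$. The only difference is organisational: the paper assumes $\eta$-parallelism and derives the formula (leaving the converse implicit), whereas you package the same computation as an unconditional ``master identity'' from which the biconditional is read off immediately; your framing is cleaner and makes the ``if and only if'' genuinely visible.
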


\begin{proof}
\textit{\ }Let $M$ be an invariant submanifold of a generalized Kenmotsu
manifold $\bar{M}$. The second fundemantal form $h$ of $M$ is said to be $%
\eta -parallel$ if $(\nabla _{X}h)(\varphi Y,\varphi Z)=0$ for all vector
fields $X,Y$ and $Z$ tangent to $M$.

First of all, we have%
\begin{equation*}
(\nabla _{X}h)(\varphi Y,\varphi Z)=\nabla _{X}h(\varphi Y,\varphi
Z)-h(\nabla _{X}\varphi Y,\varphi Z)-h(\varphi Y,\nabla _{X}\varphi Z).
\end{equation*}%
Then%
\begin{equation*}
\nabla _{X}h(\varphi Y,\varphi Z)=h(\nabla _{X}\varphi Y,\varphi
Z)+h(\varphi Y,\nabla _{X}\varphi Z).
\end{equation*}%
Using $(3,11),$ we get%
\begin{eqnarray*}
-\nabla _{X}h(Y,Z) &=&h((\nabla _{X}\varphi )Y+\varphi (\nabla
_{X}Y),\varphi Z)+h(\varphi Y,(\nabla _{X}\varphi )Z+\varphi (\nabla _{X}Z))
\\
&=&h((\nabla _{X}\varphi )Y,\varphi Z)-h(\nabla _{X}Y,Z)+h(\varphi Y,(\nabla
_{X}\varphi )Z)-h(Y,\nabla _{X}Z).
\end{eqnarray*}%
Thus, by using $(3,9)$ we have%
\begin{equation*}
-\nabla _{X}h(Y,Z)=-\overset{s}{\underset{i=1}{\sum }}\eta ^{i}(Y)h(\varphi
X,\varphi Z)-h(\nabla _{X}Y,Z)-\overset{s}{\underset{i=1}{\sum }}\eta
^{i}(Z)h(\varphi Y,\varphi X)-h(Y,\nabla _{X}Z).
\end{equation*}
\end{proof}

\begin{theorem}
Let $M$ be an invariant submanifold of a $(2n+s)$-dimensional  generalized
Kenmotsu manifold $\bar{M}.$Then $\bar{R}(X,Y)\xi _{j}$ is tangent to $M$
for any $X,Y\in \Gamma (TM).$
\end{theorem}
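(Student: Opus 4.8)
The plan is to reduce the claim to the closed-form expression for $\bar R(\cdot,\cdot)\xi$ on a generalized Kenmotsu manifold and then exploit invariance. By $(2,6)$ we have
\[
\bar R(X,Y)\xi_j=\sum_{k=1}^{s}\{\eta^{k}(Y)\varphi^{2}X-\eta^{k}(X)\varphi^{2}Y\}
\]
for all $X,Y\in\Gamma(TM)$. Hence it suffices to check that $\varphi^{2}X\in\Gamma(TM)$ whenever $X\in\Gamma(TM)$. Since $M$ is invariant, $\varphi X\in\Gamma(TM)$, and applying $\varphi$ once more keeps us in $\Gamma(TM)$, so $\varphi^{2}X\in\Gamma(TM)$; equivalently, by $(2,1)$ one has $\varphi^{2}X=-X+\sum_{i=1}^{s}\eta^{i}(X)\xi_i$, and the right-hand side is tangent to $M$ because the $\xi_i$ are tangent to $M$ by the definition of an invariant submanifold. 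Feeding this back into the displayed formula, $\bar R(X,Y)\xi_j$ is a linear combination of the tangent vector fields $\varphi^{2}X$ and $\varphi^{2}Y$, hence tangent to $M$.

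An even shorter route, which I would record as a remark, is to invoke the theorem containing $(3,14)$, whose proof establishes $\bar R(X,Y)\xi_i=R(X,Y)\xi_i$ on an invariant submanifold. Since $R$ is the curvature tensor of the induced Levi-Civita connection $\nabla$ on $M$, the field $R(X,Y)\xi_i$ is automatically a section of $TM$, and the conclusion is immediate.

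There is essentially no obstacle here: both ingredients — the closed form for $\bar R(\cdot,\cdot)\xi$ and the tangency of $\varphi^{2}X$ — are already in hand, so the argument is a two-line verification. If one insisted on a proof independent of $(2,6)$, the slightly longer path would be to expand $\bar R(X,Y)\xi_j=\bar\nabla_X\bar\nabla_Y\xi_j-\bar\nabla_Y\bar\nabla_X\xi_j-\bar\nabla_{[X,Y]}\xi_j$ using $(2,5)$, the Gauss formula $(3,1)$ and $(3,12)$; the normal terms generated by the Gauss formula cancel because $h(X,\xi_j)=0$ by $(3,13)$, which is the only mildly delicate bookkeeping step.
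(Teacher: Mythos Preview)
Your proof is correct and follows essentially the same approach as the paper: both invoke the closed-form expression $(2.6)$ for $\bar R(X,Y)\xi_j$ and then use $\varphi^{2}X=-X+\sum_i\eta^{i}(X)\xi_i$ together with the tangency of $X$ and the $\xi_i$. The paper phrases the last step as $\bar g(\bar R(X,Y)\xi_j,N_l)=0$ for every normal $N_l$, whereas you argue tangency of $\varphi^{2}X$ directly, but this is only a cosmetic difference.
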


\begin{proof}
For each $N_{l}\in \Gamma (TM)^{\perp }$ we have 
\begin{eqnarray*}
\bar{g}(\bar{R}(X,Y)\xi _{j},N_{l}) &=&\bar{g}(\overset{s}{\underset{i=1}{%
\sum }}\{\eta ^{i}(Y)\varphi ^{2}X-\eta ^{i}(X)\varphi ^{2}Y\},N_{l}) \\
&=&\bar{g}(\overset{s}{\underset{i=1}{\sum }}\eta ^{i}(Y)\varphi
^{2}X,N_{l})+\bar{g}(\overset{s}{\underset{i=1}{\sum }}\eta ^{i}(X)\varphi
^{2}Y,N_{l}) \\
&=&\overset{s}{\underset{i=1}{\sum }}\eta ^{i}(Y)\{\bar{g}(-X+\overset{s}{%
\underset{k=1}{\sum }}\eta ^{k}(X)\xi _{k},N_{l})\} \\
&&+\overset{s}{\underset{i=1}{\sum }}\eta ^{i}(X)\{\bar{g}(-Y+\overset{s}{%
\underset{k=1}{\sum }}\eta ^{k}(Y)\xi _{k},N_{l})\} \\
&=&\overset{s}{\underset{i,k=1}{\sum }}\{-\bar{g}(X,N_{l})+\eta ^{i}(Y)\eta
^{k}(X)\bar{g}(\xi _{k},N_{l}) \\
&&\text{ \ \ \ \ \ \ \ }-\bar{g}(Y,N_{l})+\eta ^{i}(Y)\eta ^{k}(X)\bar{g}%
(\xi _{k},N_{l})\} \\
\ &=&0.
\end{eqnarray*}
\end{proof}

\begin{theorem}
Let $M$ be an invariant submanifold of a $(2n+s)$-dimensional generalized
Kenmotsu manifold $\bar{M}.$Then we have,%
\begin{equation*}
g(R(X,\varphi X)\varphi X,X)=\bar{g}(\bar{R}(X,\varphi X)\varphi X,X)-2\bar{g%
}(h(X,X),h(X,X)).
\end{equation*}
\end{theorem}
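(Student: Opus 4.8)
The plan is to specialize the Gauss equation $(3,7)$ and then simplify the resulting second-fundamental-form terms by means of the invariance identities for $h$ proved above. Concretely, I would set $Y=Z=\varphi X$ and $W=X$ in $(3,7)$, which gives
\[
\bar g(\bar R(X,\varphi X)\varphi X,X)=g(R(X,\varphi X)\varphi X,X)+\bar g(h(\varphi X,X),h(X,\varphi X))-\bar g(h(X,X),h(\varphi X,\varphi X)).
\]
Everything then reduces to rewriting the two normal-bundle inner products on the right-hand side.

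For the middle term, the Corollary to Theorem~7 gives $h(\varphi X,X)=h(X,\varphi X)=\varphi\,h(X,X)$, so that $\bar g(h(\varphi X,X),h(X,\varphi X))=\bar g(\varphi h(X,X),\varphi h(X,X))$. Here I would apply the compatibility relation $(2,2)$ to the \emph{normal} vector $N=h(X,X)$: since the $\xi_i$ are tangent to $M$ we have $\eta^i(h(X,X))=\bar g(h(X,X),\xi_i)=0$ for every $i$, and therefore $\bar g(\varphi h(X,X),\varphi h(X,X))=\bar g(h(X,X),h(X,X))$. For the last term, the same Corollary gives $h(\varphi X,\varphi X)=-h(X,X)$, whence $\bar g(h(X,X),h(\varphi X,\varphi X))=-\bar g(h(X,X),h(X,X))$.

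Substituting these two evaluations back, the right-hand side becomes $g(R(X,\varphi X)\varphi X,X)+\bar g(h(X,X),h(X,X))+\bar g(h(X,X),h(X,X))$, that is, $g(R(X,\varphi X)\varphi X,X)+2\bar g(h(X,X),h(X,X))$; transposing the last term yields the asserted identity.

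The argument is essentially routine. The only point that needs a moment's care is not to ``cancel'' the two copies of $\varphi$ in $\bar g(\varphi h(X,X),\varphi h(X,X))$ without justification, but to obtain that equality from $(2,2)$ using that $h(X,X)$ is a normal vector, so that the $\eta^i$-corrections drop out. Everything else is bookkeeping with the symmetry of $h$ and the Corollary's identities $h(\varphi X,Y)=h(X,\varphi Y)=\varphi h(X,Y)$ and $h(\varphi X,\varphi Y)=-h(X,Y)$.
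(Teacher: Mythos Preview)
Your proposal is correct and follows essentially the same route as the paper: specialize the Gauss equation $(3,7)$ with $Y=Z=\varphi X$, $W=X$, then use $(3,10)$ and $(3,11)$ to simplify the two $h$-terms. If anything, your version is slightly more careful than the paper's, since you explicitly justify $\bar g(\varphi h(X,X),\varphi h(X,X))=\bar g(h(X,X),h(X,X))$ via $(2,2)$ and the vanishing of $\eta^i$ on normal vectors, whereas the paper absorbs this step into ``using $(3,11)$''.
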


\begin{proof}
First of all, we have 
\begin{eqnarray*}
\bar{g}(\bar{R}(X,\varphi X)\varphi X,X) &=&g(R(X,\varphi X)\varphi X,X)-%
\bar{g}(h(X,X),h(\varphi X,\varphi X)) \\
&&+\bar{g}(h(\varphi X,X),h(X,\varphi X))
\end{eqnarray*}%
From $(3,10)$, we get $\qquad $%
\begin{eqnarray*}
g(R(X,\varphi X)\varphi X,X) &=&\bar{g}(\bar{R}(X,\varphi X)\varphi X,X)+%
\bar{g}(h(X,X),h(\varphi X,\varphi X)) \\
&&-\bar{g}(\varphi h(X,X),\varphi h(X,X)).
\end{eqnarray*}%
Using $(3,11)$ we have $\qquad $%
\begin{equation*}
g(R(X,\varphi X)\varphi X,X)=\bar{g}(\bar{R}(X,\varphi X)\varphi X,X)-2\bar{g%
}(h(X,X),h(X,X)).
\end{equation*}
\end{proof}

\section{Semiparallel and 2-Semiparallel Invariant Submanifolds of A
Generalized Kenmotsu Manifold}

Let $M$ be a submanifold of a Riemannian manifold $\bar{M}.$An isometric
immersion $i:M\rightarrow \bar{M}$ is \textit{semi-parallel} if 
\begin{equation*}
\bar{R}(X,Y)h=\bar{\nabla}_{X}(\bar{\nabla}_{Y}h)-\bar{\nabla}_{Y}(\bar{%
\nabla}_{X}h)-\bar{\nabla}_{[X,Y]}h=0
\end{equation*}%
where $\bar{R}$ is the curvature tensor of $\bar{\nabla}$ \cite{C}, where $%
\bar{R}$ curvature tensor of the Van der Waerden-Bortolotti connection $\bar{%
\nabla}$ and $h$ the second fundemental from.

In \cite{ALMO}, K. Arslan and colleagues defined that $M$ is \textit{%
2-semiparallel} submanifolds if 
\begin{equation*}
R(X,Y)\nabla h=0
\end{equation*}%
for all vector fields $X,Y$ tanget to $M$.

$\bar{\nabla}$ is the connection in $TM\oplus TM^{^{\perp }}$ build with $%
\nabla $ and $\nabla ^{^{\perp }}$, where R (resp. $R^{\perp }$) denotes
curvature tensor of the connection $\nabla $ (resp. $\nabla ^{^{\perp }}$).
If $R^{\perp }$ denotes the curvature tensor of $\nabla ^{^{\perp }}$ then,%
\begin{equation}
(\bar{R}(X,Y)h)(Z,U)=R^{\perp }(X,Y)h(Z,U)-h(R(X,Y)Z,U)-h(Z,R(X,Y)U)
\end{equation}%
for all vector fields $X,Y,Z,U$ tanget to $M$ \cite{C}. In addition,%
\begin{eqnarray}
(\bar{R}(X,Y)\bar{\nabla}h)(Z,U,V) &=&R^{\perp }(X,Y)(\bar{\nabla}h)(Z,U,V)-(%
\bar{\nabla}h)(R(X,Y)Z,U,V)  \notag \\
&&-(\bar{\nabla}h)(Z,R(X,Y)U)-(\bar{\nabla}h)(Z,U,R(X,Y)V)
\end{eqnarray}%
or all vector fields $X,Y,Z,U,V$ tanget to $M$ where $(\bar{\nabla}%
h)(Z,U,V)=(\bar{\nabla}_{Z}h)(U,V)$ \cite{ALMO}.

\begin{theorem}
Let $M$ be an invariant submanifold of a generalized Kenmotsu manifold $\bar{%
M}.$Then $M$ is semi-parallel if and only if $M$ is total geodesic.
\end{theorem}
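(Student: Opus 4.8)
The plan is to handle the two implications separately, with essentially all the work in the forward direction. If $M$ is totally geodesic then $h=0$, so trivially $\bar R(X,Y)h=0$ and $M$ is semi-parallel; I would dispose of this in one line. For the converse I would start from the semi-parallel hypothesis in the form given by $(4,1)$, namely
\begin{equation*}
(\bar R(X,Y)h)(Z,U)=R^{\perp}(X,Y)h(Z,U)-h(R(X,Y)Z,U)-h(Z,R(X,Y)U)=0
\end{equation*}
for all $X,Y,Z,U\in\Gamma(TM)$, and specialize the free arguments. Since $M$ is invariant, each $\xi_i$ is tangent to $M$, so I am free to set $Y=\xi_i$ and $U=\xi_i$.

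With this choice I would invoke the earlier results: by Theorem (the one giving $(3,13)$) we have $h(\cdot,\xi_i)=0$, which kills both the term $R^{\perp}(X,\xi_i)h(Z,\xi_i)$ and the term $h(R(X,\xi_i)Z,\xi_i)$. The only survivor is $-h(Z,R(X,\xi_i)\xi_i)$. By the Corollary following Theorem on $R(X,Y)\xi_i$ we have $R(X,\xi_j)\xi_i=\varphi^2X$, so the semi-parallel condition collapses to
\begin{equation*}
h(Z,\varphi^2X)=0\qquad\text{for all }X,Z\in\Gamma(TM).
\end{equation*}
Finally I would expand $\varphi^2X=-X+\sum_{k=1}^{s}\eta^k(X)\xi_k$ using $(2,1)$ and use $h(Z,\xi_k)=0$ once more to get $h(Z,\varphi^2X)=-h(Z,X)$, hence $h(Z,X)=0$ for all tangent $X,Z$, i.e.\ $M$ is totally geodesic.

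There is no serious obstacle here; the argument is a short computation and the only real decision is the choice of specialization. The one point to be careful about is making sure that every auxiliary fact used is one already established for invariant submanifolds of a generalized Kenmotsu manifold — specifically $h(X,\xi_i)=0$, the identity $R(X,\xi_j)\xi_i=\varphi^2X$, and the algebraic relation $\varphi^2X=-X+\sum_i\eta^i(X)\xi_i$ — rather than properties that need the ambient manifold. If one preferred not to set both $Y$ and $U$ equal to $\xi_i$ at once, an equivalent route is to first set only $U=\xi_i$, deduce $h(Z,R(X,Y)\xi_i)=0$ for all $X,Y,Z$, and then put $Y=\xi_i$; this leads to the same conclusion.
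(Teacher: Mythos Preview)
Your proof is correct and follows essentially the same approach as the paper: specialize the identity $(4{,}1)$ by inserting structure vectors $\xi_i$ into two of the four slots, use $h(\cdot,\xi_i)=0$ to annihilate two terms, and then reduce the surviving term via $R(\,\cdot\,,\xi_j)\xi_i=\varphi^2(\,\cdot\,)$ and $\varphi^2X=-X+\sum_k\eta^k(X)\xi_k$. The only cosmetic difference is which pair of slots receives the $\xi$'s (the paper takes the first and fourth, you take the second and fourth), and you additionally spell out the trivial converse, which the paper leaves implicit.
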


\begin{proof}
Suppose that $M$ is semi-parallel. Then, $\bar{R}(X,Y)h=0$ for each $X,Y\in
\Gamma (TM)$.Using $(4,1)$, we get%
\begin{equation*}
R^{\perp }(X,Y)h(Z,K)-h(R(X,Y)Z,K)-h(Z,R(X,Y)K)=0.
\end{equation*}%
We take $X=\xi _{i}$ \ and $K=\xi _{j}$ then, 
\begin{equation*}
R^{\perp }(\xi _{i},Y)h(Z,\xi _{j})-h(R(\xi _{i},Y)Z,\xi _{j})-h(Z,R(\xi
_{i},Y)\xi _{j})=0.
\end{equation*}%
From $(3,13)$%
\begin{equation*}
h(Z,R(\xi _{i},Y)\xi _{j})=0.
\end{equation*}%
Using $(3,15)$%
\begin{equation*}
h(Z,Y-\overset{s}{\underset{t=1}{\sum }}\eta _{t}(X)\xi _{t})=0.
\end{equation*}%
Then, we have%
\begin{equation*}
h(Z,Y)=0.
\end{equation*}
\end{proof}

\begin{theorem}
Let $M$ be an invariant submanifold of a generalized Kenmotsu manifold $\bar{%
M}.$Then $M$ is 2-semiparallel if and only if $M$ is total geodesic.
\end{theorem}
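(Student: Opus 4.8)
The plan is to mirror the proof of the preceding theorem (the semi\-parallel case), but to work with the third\-order identity $(4,2)$ in place of $(4,1)$. For the ``if'' direction, suppose $M$ is totally geodesic, so $h\equiv 0$. Then for all $Z,U,V\in\Gamma(TM)$ we have $(\bar\nabla_{Z}h)(U,V)=\nabla^{\perp}_{Z}(h(U,V))-h(\nabla_{Z}U,V)-h(U,\nabla_{Z}V)=0$, hence $\bar\nabla h\equiv 0$ and a fortiori $\bar R(X,Y)\bar\nabla h=0$; that is, $M$ is $2$\-semiparallel. So only the converse requires work.

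For the ``only if'' direction, assume $(\bar R(X,Y)\bar\nabla h)(Z,U,V)=0$ for all vector fields tangent to $M$, and expand the left\-hand side by $(4,2)$. I would specialize $X=\xi_{i}$, $U=\xi_{k}$, $V=\xi_{l}$, leaving $Y$ and $Z$ arbitrary. The key first observation is that $(\bar\nabla h)(Z,\xi_{k},\xi_{l})=(\bar\nabla_{Z}h)(\xi_{k},\xi_{l})=0$: by $(3,13)$ one has $h(\,\cdot\,,\xi_{l})\equiv 0$, so all three terms in the definition of $\bar\nabla_{Z}h$ vanish (here one uses that $\nabla_{Z}\xi_{k}$ is tangent, via the formula $\nabla_{X}\xi_{j}=-\varphi^{2}X$). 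Consequently the first two terms on the right of $(4,2)$, namely $R^{\perp}(\xi_{i},Y)(\bar\nabla h)(Z,\xi_{k},\xi_{l})$ and $(\bar\nabla h)(R(\xi_{i},Y)Z,\xi_{k},\xi_{l})$, drop out.

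It then remains to evaluate the two terms in which the curvature acts on a $\xi$. From the Corollary giving $R(\xi_{i},Y)\xi_{k}=-\varphi^{2}Y$, together with the symmetry of $\bar\nabla_{Z}h$ in its last two slots and $(3,18)$ in the form $(\bar\nabla_{Z}h)(W,\xi_{l})=-h(Z,W)$, I obtain
\[
(\bar\nabla h)(Z,R(\xi_{i},Y)\xi_{k},\xi_{l})=(\bar\nabla_{Z}h)(-\varphi^{2}Y,\xi_{l})=h(Z,\varphi^{2}Y),
\]
and likewise $(\bar\nabla h)(Z,\xi_{k},R(\xi_{i},Y)\xi_{l})=h(Z,\varphi^{2}Y)$. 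Since $\varphi^{2}Y=-Y+\sum_{t}\eta^{t}(Y)\xi_{t}$ and $h(Z,\xi_{t})=0$, each of these equals $-h(Z,Y)$. Substituting everything into $(4,2)$ yields $0=0-0-(-h(Z,Y))-(-h(Z,Y))=2h(Z,Y)$, whence $h\equiv 0$ and $M$ is totally geodesic.

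The step I expect to demand the most care is establishing $(\bar\nabla h)(Z,\xi_{k},\xi_{l})=0$ cleanly, and then the sign bookkeeping at the end, where $(3,18)$ must be applied to the middle argument by first invoking the symmetry $(\bar\nabla_{Z}h)(A,B)=(\bar\nabla_{Z}h)(B,A)$; it is also worth checking that the mislabeled middle term of $(4,2)$ is read as $(\bar\nabla h)(Z,R(X,Y)U,V)$. Once these are handled, the remainder is a routine substitution of $(3,13)$, $(3,18)$ and the curvature corollaries already proved.
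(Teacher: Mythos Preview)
Your proof is correct and follows the paper's strategy: specialize the identity $(4.2)$ by inserting structure vectors and then invoke $(3.13)$, $(3.18)$, and the curvature formula $R(\xi_i,Y)\xi_k=-\varphi^{2}Y$. The only difference is cosmetic: you set both $U=\xi_k$ and $V=\xi_l$ at the outset, so the first two terms of $(4.2)$ vanish immediately and you reach $2h(Z,Y)=0$ in one pass, whereas the paper first takes only $U=\xi_j$, carries a general $V$ through a longer computation, and then specializes $V=\xi_k$ at the end---your choice is a shade more efficient but not a genuinely different route.
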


\begin{proof}
Suppose that $M$ is 2-semiparallel. Then, $\bar{R}(X,Y)\bar{\nabla}h=0$ for
each $X,Y,Z,U,V\in \Gamma (TM)$.Using $(4,2)$, we get%
\begin{equation*}
R^{\perp }(X,Y)(\bar{\nabla}h)(Z,U,V)-(\bar{\nabla}h)(R(X,Y)Z,U,V)-(\bar{%
\nabla}h)(Z,R(X,Y)U)-(\bar{\nabla}h)(Z,U,R(X,Y)V)=0.
\end{equation*}%
We take $X=\xi _{i}$ \ and $U=\xi _{j}$ then, we have%
\begin{equation*}
R^{\perp }(\xi _{i},Y)(\bar{\nabla}h)(Z,\xi _{j},V)-(\bar{\nabla}h)(R(\xi
_{i},Y)Z,\xi _{j},V)-(\bar{\nabla}h)(Z,R(\xi _{i},Y)\xi _{j},V)-(\bar{\nabla}%
h)(Z,\xi _{j},R(\xi _{i},Y)V)=0.
\end{equation*}%
From $(3,11),(3,12),(3,13),(3,15)$ and $(3,16)$ we get 
\begin{eqnarray*}
(\bar{\nabla}h)(Z,\xi _{j},V) &=&(\bar{\nabla}_{Z}h)(\xi _{j},V) \\
&=&\nabla _{Z}^{\perp }(h(\xi _{j},V))-h(\nabla _{Z}\xi _{j},V)-h(\xi
_{j},\nabla _{Z}V) \\
&=&-h(-\varphi ^{2}Z,V) \\
&=&-h(Z,V).
\end{eqnarray*}

Therefore, we have%
\begin{eqnarray*}
(\bar{\nabla}h)(R(\xi _{i},Y)Z,\xi _{j},V) &=&(\bar{\nabla}_{R(\xi
_{i},Y)Z}h)(\xi _{j},V) \\
&=&\nabla _{_{R(\xi _{i},Y)Z}}^{\perp }(h(\xi _{j},V))-h(\nabla _{R(\xi
_{i},Y)Z}\xi _{j},V)-h(\xi _{j},\nabla _{_{R(\xi _{i},Y)Z}}V) \\
&=&-h(-\varphi ^{2}R(\xi _{i},Y)Z,V) \\
&=&-h(R(\xi _{i},Y)Z,V) \\
&=&-h(\overset{s}{\underset{l=1}{\sum }}\{g(Y,\varphi ^{2}Z)\xi _{l}-\eta
^{l}(Z)\varphi ^{2}Y\},V) \\
&=&\overset{s}{\underset{l=1}{\sum }}\eta ^{l}(Z)h(\varphi ^{2}Y,V) \\
&=&-\overset{s}{\underset{l=1}{\sum }}\eta ^{l}(Z)h(Y,V),
\end{eqnarray*}%
\begin{eqnarray*}
(\bar{\nabla}h)(Z,R(\xi _{i},Y)\xi _{j},V) &=&(\bar{\nabla}_{Z}h)(R(\xi
_{i},Y)\xi _{j},V) \\
&=&\nabla _{Z}^{\perp }(h(R(\xi _{i},Y)\xi _{j},V))-h(\nabla _{Z}R(\xi
_{i},Y)\xi _{j},V)-h(R(\xi _{i},Y)\xi _{j},\nabla _{Z}V) \\
&=&\nabla _{Z}^{\perp }(h(-\varphi ^{2}Y,V))-h(\nabla _{Z}(-\varphi
^{2}Y),V)-h(-\varphi ^{2}Y,\nabla _{Z}V)
\end{eqnarray*}

and%
\begin{eqnarray*}
(\bar{\nabla}h)(Z,\xi _{j},R(\xi _{i},Y)V) &=&(\bar{\nabla}_{Z}h)(\xi
_{j},R(\xi _{i},Y)V) \\
&=&\nabla _{Z}^{\perp }(h(\xi _{j},R(\xi _{i},Y)V))-h(\nabla _{Z}\xi
_{j},R(\xi _{i},Y)V)-h(\xi _{j},\nabla _{Z}R(\xi _{i},Y)V) \\
&=&-h(\nabla _{Z}\xi _{j},R(\xi _{i},Y)V) \\
&=&-h(Z-\overset{s}{\underset{t=1}{\sum }}\eta ^{t}(Z)\xi _{t},R(\xi
_{i},Y)V) \\
&=&-h(Z,R(\xi _{i},Y)V) \\
&=&-h(Z,\overset{s}{\underset{l=1}{\sum }}\{g(Y,\varphi ^{2}V)\xi _{l}-\eta
^{l}(V)\varphi ^{2}Y\}) \\
&=&\overset{s}{\underset{l=1}{\sum }}\eta ^{l}(V)h(Z,\varphi ^{2}Y) \\
&=&-\overset{s}{\underset{l=1}{\sum }}\eta ^{l}(V)h(Z,Y).
\end{eqnarray*}%
Then, we get%
\begin{eqnarray*}
&&R^{\perp }(\xi _{i},Y)(-h(Z,V))-(-\overset{s}{\underset{l=1}{\sum }}\eta
^{l}(Z)h(Y,V))-(\nabla _{Z}^{\perp }(h(-\varphi ^{2}Y,V)) \\
&&-h(\nabla _{Z}(-\varphi ^{2}Y),V)-h(-\varphi ^{2}Y,\nabla _{Z}V))-(-%
\overset{s}{\underset{l=1}{\sum }}\eta ^{l}(V)h(Z,Y)) \\
&=&0.
\end{eqnarray*}%
So we take $V=\xi _{k}$ then, we have%
\begin{equation*}
h(\varphi ^{2}Y,\nabla _{Z}\xi _{k}))+\overset{s}{\underset{l=1}{\sum }}\eta
^{l}(\xi _{k})h(Z,Y)=0
\end{equation*}%
\begin{equation*}
h(Y,\nabla _{Z}\xi _{k})+h(Z,Y)=0
\end{equation*}%
\begin{equation*}
h(Y,Z)=0.
\end{equation*}
\end{proof}

\end{document}